\newcommand{\refeq}[1]{{\rm (\ref{#1})}}
\newlength{\longeqmarginwidth}
\newlength{\longeqwidth}
\newlength{\longeqskiplength}
\newcommand{\longeq}[1]{
\settowidth{\longeqmarginwidth}{{\Large \{~\}}~(\theequation)}
\setlength{\longeqwidth}{\textwidth}
\addtolength{\longeqwidth}{-\longeqmarginwidth}
\left. \parbox{\longeqwidth}{\begin{eqnarray*} #1
\end{eqnarray*}} \right\}}
\newcommand{\Vzero}[2]{\mathord{\stackrel{\textstyle\kern1pt\circ}
{\smash V\vbox to6pt{}}}\vphantom{V}_{#1}^{#2}}
\newcommand{\function}[2]{:#1 \longrightarrow #2}
\newcommand{\of}[1]{\left( #1 \right)}
\newcommand{\set}[2]{\left\{\hspace{0.2ex} #1 \left|\: #2
\right. \right\}}
\newcommand{\eval}[2]{\llbracket #1 \rrbracket_{#2}}
\newcommand{\df}{\stackrel{\rm def}{=}}
\newcommand{\Hom}{{\rm Hom}}
\newcommand{\rn}{\boldsymbol}
\newcommand{\scr}{\mathcal}
\newcommand{\prob}[1]{ {\bf P}\! \left[ #1 \right] }
\newcommand{\expect}[1]{ {\bf E}\! \left[ #1 \right] }
\newcommand{\condexpect}[2]{ {\bf E}\! \left[ \hspace{0.5mm} #1
\left| \hspace{0.5mm} #2 \right. \right] }
\newcounter{operator}
\newcommand{\FDF}{\text{FDF}}
\newcommand{\Graph}{\text{Graph}}
\newcommand{\C}{\text{C}}
\newcommand{\CE}{\text{CE}}
\title{On the Fon-der-Flaass Interpretation of Extremal Examples for Tur\'an's (3,4)-problem}
\author{Alexander A. Razborov\thanks{University of Chicago, {\tt razborov@cs.uchicago.edu}. Part of this work was done while the author was with
Steklov Mathematical Institute, supported by the Russian Foundation
for Basic Research, and with Toyota Technological Institute at Chicago.}}
\begin{document}
\maketitle

\begin{abstract}
In 1941, Tur\'an conjectured that the edge density of any 3-graph without independent sets on 4 vertices ({\em Tur\'an $(3,4)$-graph}) is $\geq 4/9(1-o(1))$, and he gave the first example witnessing this bound. Brown (1983) and Kostochka (1982) found many other examples of this density. Fon-der-Flaass (1988) presented a general construction that converts an arbitrary $\vec C_4$-free orgraph $\Gamma$ into a Tur\'an $(3,4)$-graph. He observed that all Tur\'an-Brown-Kostochka examples result from his construction, and proved the bound $\geq 3/7(1-o(1))$ on the edge density of any Tur\'an $(3,4)$-graph obtainable in this way.

\bigskip
In this paper we establish the optimal bound $4/9(1-o(1))$ on the edge density of any Tur\'an $(3,4)$-graph resulting from the Fon-der-Flaass construction under any of the following assumptions on the undirected graph $G$ underlying the orgraph $\Gamma$:
\begin{itemize}
\item $G$ is complete multipartite;

\item the edge density of $G$ is $\geq (2/3-\epsilon)$ for some absolute constant $\epsilon>0$.
\end{itemize}
We are also able to improve Fon-der-Flaass's bound to $7/16(1-o(1))$ without any extra assumptions on $\Gamma$.
\end{abstract}

\section{Introduction}

In the classical paper \cite{Man}, Mantel determined the minimal
number of edges a graph $G$ must have so that every three vertices
span at least one edge. In the paper
\cite{Tur} (that essentially started off the field of extremal
combinatorics), Tur\'an generalized Mantel's result to independent
sets of arbitrary size. He also asked if similar generalizations can
be obtained for hypergraphs, and these questions became notoriously
known ever since as one of the most difficult open problems in
discrete mathematics.

To be more specific, for a family $\scr H$ of $r$-uniform hypergraphs ($r$-graphs in what follows) and an integer $n$, let $\text{ex}_{\min}(n; \scr H)$ be the minimal possible number of edges  in an $n$-vertex $r$-graph not containing any of $H\in\scr H$ as an induced subgraph, and let

$$
\pi_{\min}(\scr H) \df \lim_{n\rightarrow\infty}
\frac{\text{ex}_{\min}(n; \scr H)}{{n\choose r}}
$$
(it is well-known that this limit exists).

For $\ell>r\geq 2$, let $I^r_\ell$ be the empty $r$-graph on $\ell$ vertices. Then we still do not know $\pi_{\min}(I_\ell^r)$ (not even to mention the exact value of $\text{ex}_{\min}(n; I^r_\ell)$) for {\em any} pair $\ell>r\geq 3$, although plausible conjectures do exist \cite{Sid}. The simplest unresolved case that has also received most attention is $r=3,\ \ell=4$; it is sometimes called {\em Tur\'an's $(3,4)$-problem} (see e.g. \cite{Kos}). Tur\'an himself exhibited an infinite family of 3-graphs witnessing $\pi_{\min}(I^3_4)\leq 4/9$ and conjectured that this is actually the right value. \cite{Dec}, Giraud (unpublished), \cite{GrL} proved
increasingly stronger lower bounds on $\pi_{\min}(I^3_4)$, with the
current record
$$
\pi_{\min}(I^3_4) \geq 0.438334
$$
\cite{turan}. The latter bound in fact represents the result of the numerical computation of a natural positive semi-definite program associated with the Tur\'an's $(3,4)$-problem; the best bound for which a ``human'' proof is available is
$$
\pi_{\min}(I^3_4)\geq \frac{9-\sqrt{17}}{12}\geq 0.406407
$$
\cite{GrL}. \cite{turan} also proved that $\pi_{\min}(I^3_4,G_3)= 4/9$, where $G_3$ is the 3-graph on 4 vertices with 3 edges. As an induced subgraph, $G_3$ is also missing in Tur\'an's original example, and Pikhurko \cite{Pik} proved that asymptotically this is the {\em only} example of a $\{I^3_4, G_3\}$-free 3-graph attaining the equality here.

\medskip
One prominent way to attack any extremal problem is by better understanding the structure of its (conjectured) extremal configurations. And one of the (many) difficulties associated with Tur\'an's $(3,4)$-problem is that this set is extraordinarily complex and consists of a continuous family of principally different configurations (in terminology of \cite{flag}, a family of different homomorphisms $\phi\in\Hom^+(\scr A^0,{\Bbb R})$ attaining $\phi(\rho_3)=4/9$) parameterized by probability distributions on the real line. These examples were constructed by Brown \cite{Bro} and Kostochka \cite{Kos}, and the only (to the best of our knowledge) successful attempt to ``explain'' them in ``external'' terms was undertaken by Fon-der-Flaass \cite{Fon}.

More specifically, let $\Gamma$ be an orgraph without induced (oriented) cycles $\vec C_4$. Fon-der-Flaass defined the 3-graph $FDF(\Gamma)$ with the same vertex sets by letting $(a,b,c)$ span an edge iff the induced orgraph $\Gamma|_{\{a,b,c\}}$ is isomorphic to one of $I_3,\bar P_3,\vec K_{1,2},\vec{T_3}$ on Figure \ref{m3}.
\begin{figure}[tb]
\begin{center}
% This file was made with: LaTeXPiX  (Build 3618)
% Coded by: N.J.H.M. van Beurden
% Email: beurden@email.com
% Webpage: http://www.beurden.cjb.net
\setlength{\unitlength}{0.254mm}
\begin{picture}(391,188)(13,-201)
        \special{color rgb 0 0 0}\allinethickness{0.254mm}\special{sh 0.99}\put(15,-65){\ellipse{4}{4}} % Shade Dot
        \special{color rgb 0 0 0}\allinethickness{0.254mm}\special{sh 0.99}\put(40,-15){\ellipse{4}{4}} % Shade Dot
        \special{color rgb 0 0 0}\allinethickness{0.254mm}\special{sh 0.99}\put(65,-65){\ellipse{4}{4}} % Shade Dot
        \special{color rgb 0 0 0}\allinethickness{0.254mm}\special{sh 0.99}\put(115,-65){\ellipse{4}{4}} % Shade Dot
        \special{color rgb 0 0 0}\allinethickness{0.254mm}\special{sh 0.99}\put(140,-15){\ellipse{4}{4}} % Shade Dot
        \special{color rgb 0 0 0}\allinethickness{0.254mm}\special{sh 0.99}\put(165,-65){\ellipse{4}{4}} % Shade Dot
        \special{color rgb 0 0 0}\put(35,-91){\shortstack{$I_3$}} % Plain Text
        \special{color rgb 0 0 0}\put(135,-91){\shortstack{$\bar P_3$}} % Plain Text
        \special{color rgb 0 0 0}\allinethickness{0.254mm}\special{sh 0.99}\put(215,-65){\ellipse{4}{4}} % Shade Dot
        \special{color rgb 0 0 0}\allinethickness{0.254mm}\special{sh 0.99}\put(240,-15){\ellipse{4}{4}} % Shade Dot
        \special{color rgb 0 0 0}\allinethickness{0.254mm}\special{sh 0.99}\put(265,-65){\ellipse{4}{4}} % Shade Dot
        \special{color rgb 0 0 0}\allinethickness{0.254mm}\special{sh 0.99}\put(315,-65){\ellipse{4}{4}} % Shade Dot
        \special{color rgb 0 0 0}\allinethickness{0.254mm}\special{sh 0.99}\put(340,-15){\ellipse{4}{4}} % Shade Dot
        \special{color rgb 0 0 0}\allinethickness{0.254mm}\special{sh 0.99}\put(365,-65){\ellipse{4}{4}} % Shade Dot
        \special{color rgb 0 0 0}\put(230,-91){\shortstack{$\vec K_{1,2}$}} % Plain Text
        \special{color rgb 0 0 0}\put(330,-91){\shortstack{$\vec K_{2,1}$}} % Plain Text
        \special{color rgb 0 0 0}\allinethickness{0.254mm}\special{sh 0.99}\put(65,-175){\ellipse{4}{4}} % Shade Dot
        \special{color rgb 0 0 0}\allinethickness{0.254mm}\special{sh 0.99}\put(90,-125){\ellipse{4}{4}} % Shade Dot
        \special{color rgb 0 0 0}\allinethickness{0.254mm}\special{sh 0.99}\put(115,-175){\ellipse{4}{4}} % Shade Dot
        \special{color rgb 0 0 0}\allinethickness{0.254mm}\special{sh 0.99}\put(165,-175){\ellipse{4}{4}} % Shade Dot
        \special{color rgb 0 0 0}\allinethickness{0.254mm}\special{sh 0.99}\put(190,-125){\ellipse{4}{4}} % Shade Dot
        \special{color rgb 0 0 0}\allinethickness{0.254mm}\special{sh 0.99}\put(215,-175){\ellipse{4}{4}} % Shade Dot
        \special{color rgb 0 0 0}\put(85,-201){\shortstack{$\vec P_3$}} % Plain Text
        \special{color rgb 0 0 0}\put(185,-201){\shortstack{$\vec C_3$}} % Plain Text
        \special{color rgb 0 0 0}\allinethickness{0.254mm}\special{sh 0.99}\put(265,-175){\ellipse{4}{4}} % Shade Dot
        \special{color rgb 0 0 0}\allinethickness{0.254mm}\special{sh 0.99}\put(290,-125){\ellipse{4}{4}} % Shade Dot
        \special{color rgb 0 0 0}\allinethickness{0.254mm}\special{sh 0.99}\put(315,-175){\ellipse{4}{4}} % Shade Dot
        \special{color rgb 0 0 0}\put(285,-201){\shortstack{$\vec T_3$}} % Plain Text
        \special{color rgb 0 0 0}\allinethickness{0.254mm}\put(115,-65){\vector(1,0){50}} % Vector Line
        \special{color rgb 0 0 0}\allinethickness{0.254mm}\put(265,-175){\vector(1,0){50}} % Vector Line
        \special{color rgb 0 0 0}\allinethickness{0.254mm}\put(240,-15){\vector(-1,-2){25}} % Vector Line
        \special{color rgb 0 0 0}\allinethickness{0.254mm}\put(240,-15){\vector(1,-2){25}} % Vector Line
        \special{color rgb 0 0 0}\allinethickness{0.254mm}\put(90,-125){\vector(1,-2){25}} % Vector Line
        \special{color rgb 0 0 0}\allinethickness{0.254mm}\put(190,-125){\vector(1,-2){25}} % Vector Line
        \special{color rgb 0 0 0}\allinethickness{0.254mm}\put(290,-125){\vector(1,-2){25}} % Vector Line
        \special{color rgb 0 0 0}\allinethickness{0.254mm}\put(315,-65){\vector(1,2){25}} % Vector Line
        \special{color rgb 0 0 0}\allinethickness{0.254mm}\put(65,-175){\vector(1,2){25}} % Vector Line
        \special{color rgb 0 0 0}\allinethickness{0.254mm}\put(165,-175){\vector(1,2){25}} % Vector Line
        \special{color rgb 0 0 0}\allinethickness{0.254mm}\put(265,-175){\vector(1,2){25}} % Vector Line
        \special{color rgb 0 0 0}\allinethickness{0.254mm}\put(365,-65){\vector(-1,2){25}} % Vector Line
        \special{color rgb 0 0 0}\allinethickness{0.254mm}\put(215,-175){\vector(-1,0){50}} % Vector Line
        \special{color rgb 0 0 0} % Set color to black again (default font color)
\end{picture}
\caption{\label{m3} Simple orgraphs on 3 vertices}
\end{center}
\end{figure}
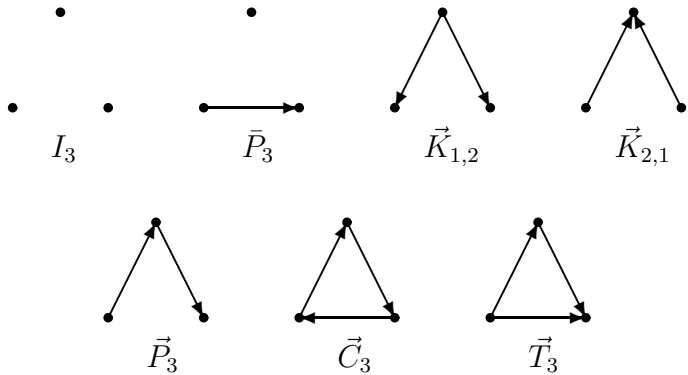
In his brief note, Fon-der-Flaass proved the following remarkable facts:
\begin{enumerate}
\item for any $\vec C_4$-free orgraph $\Gamma$, the 3-graph $FDF(\Gamma)$ is $I^3_4$-free;

\item all known extremal configurations \cite{Tur,Bro,Kos} have the form $FDF(\Gamma)$, where $\Gamma$ runs through a special class of orientations of the almost balanced complete 3-partite graph;

\item \label{c} for any $\vec C_4$-free $\Gamma$, the edge density of $FDF(\Gamma)$ is $\geq 3/7(1-o(1))$.
\end{enumerate}
The question of improving the latter bound to its conjectured optimal value $\geq 4/9(1-o(1))$, that we refer to as {\em Fon-der-Flaass conjecture}\footnote{in \cite{Fon} it was actually asked in more elusive form {\tt it remains unclear if this gap suffices for constructing in this way a counterexample to Tur\'an's conjecture} }, is still open, and this is precisely the question that motivated our paper.

While we have not been able to solve it completely, our main result (Theorem \ref{main}) achieves this goal under any one of the following two assumptions:
\begin{enumerate}
\item $\Gamma$ is an orientation of a complete multipartite graph (possibly unbalanced and with more than three parts);

\item the edge density of $\Gamma$ is $\geq 2/3-\epsilon$, where $\epsilon>0$ is an absolute constant.
\end{enumerate}
Note that both these assumptions are fulfilled by the orgraphs underlying Tur\'an-Brown-Kostochka examples, an, therefore, both these results can be interpreted as proving the desired bound $4/9(1-o(1))$ for classes of $I^3_4$-free 3-graphs containing all known conjectured extremal configurations and having a ``reasonably invariant'' description. To the best of our knowledge, this is the first result of this kind for Tur\'an's $(3,4)$-problem.

Remarkably, our second restriction (on the edge density) is precisely the same as the one used by Lov\'asz and Simonovits in their early work on the density of triangles in graphs \cite{LoSi} (for further developments see \cite{Fish,triangles,Nik}), and this bound is used as one of our starting points (see \refeq{main_1}). But we have not been able to make more formal connection between the two results and techniques used in their proofs.

Also, our second result readily implies that, provided there indeed are no extremal configurations with edge density asymptotically different from 2/3, Fone-der-Flaass conjecture can be in principle proved by brute-force using methods similar to those from \cite{turan}: as the number of vertices $\ell$ increases, the lower bound on the edge density for which the semi-definite program gives the desired result, must converge to 2/3. At the moment, however, this observation is of purely theoretical value: the best one can achieve with brute force on 5 vertices (when we already have 559 $\vec C_4$-free orgraphs) is to prove the desired bound $4/9(1-o(1))$ when the edge density of $\Gamma$ is $\leq 2/3-1.3\cdot 10^{-2}$. $1.3\cdot 10^{-2}$ is, however, {\em way} above any lower bound on $\epsilon$ in Theorem \ref{main} \ref{main:b} one can realistically hope to achieve with our methods.

\bigskip
Our proofs rely on a very substantial amount of (double) counting, and they are presented in the same framework of Flag Algebras they were found in. As an additional benefit of this approach, we get ``stable'' versions of our results for free (that is, without using hypergraph regularity lemmas): all of them hold even if we only know that the density of induced $\vec C_4$ (and the density of $\bar P_3$ in the part regarding complete multipartite graphs) is $o(1)$.  At this point, we assume certain familiarity with \cite{flag} and instead of trying to duplicate basic definitions, simply give pointers to relevant places in \cite{flag} as we go along. We also try to provide as much combinatorial intuition, examples and informal explanations as possible. In this way we are in particular able to include a few interesting observations about Tur\'an-Brown-Kostochka examples that hardly qualify as independent theorems but nonetheless might turn out to be useful for future research on Tur\'an's (3,4)-problem.

\medskip
The paper is organized as follows. In Section \ref{prel} we remind some necessary facts (recast in our language) that pertain to the previous work on Tur\'an's $(3,4)$-problem and state our main results. This is interlaced with introducing some new general notation pertaining to Flag Algebras (Section \ref{addons}), as well as with providing a registry of all concrete theories, types, flags etc. needed for our work (Section \ref{names}). In Section \ref{warm-up} we give a relatively easy improvement of Fon-der-Flaass's bound (without any extra assumptions on $\Gamma$) to $7/16(1-o(1))$, at the same time introducing some prerequisites for proving our main result, Theorem \ref{main}. This theorem itself is proved in Section \ref{main:sec}. We conclude with a few remarks in Section \ref{concl}.

\section{Preliminaries} \label{prel}

A 3-uniform hypergraph will be called a {\em 3-graph}. Given a 3-graph $H$ and an integer $n$, let $\text{ex}_{\min}(n; H)$ be the minimal possible number of edges in an $n$-vertex $3$-graph not containing $H$ as an induced subgraph, and let
$$
\pi_{\min}(H) \df \lim_{n\rightarrow\infty}\frac{\text{ex}_{\min}(n; H)}{{n\choose 3}}
$$
(it is well-known that this limit exists). In this paper we are interested in $\pi_{\min}(I^3_4)$, where $I^3_4$ is the empty 3-graph on 4 vertices. We call a 3-graph {\em Tur\'an} if it does not contain copies of $I^3_4$.
An oriented graph, or an {\em orgraph} is a directed graph without loops, multiple edges and pairs of edges of the form $\langle v,w\rangle, \langle w,v\rangle$. For an orgraph $\Gamma$ and $V\subseteq V(\Gamma)$, $\Gamma|_V$ is the induced subgraph spanned by vertices in $V$. $\vec C_4$ is an oriented cycle on 4 vertices, and an orgraph $\Gamma$ is {\em $\vec C_4$-free} if it does not contain {\em induced} copies of $\vec C_4$.

\smallskip
This whole paper is motivated by the famous {\em Tur\'an's $(3,4)$-problem}:
\begin{center}
\tt Determine $\pi_{\min}(I^3_4)$.
\end{center}
Let us first review, in appropriate terms, the set of conjectured extremal configurations \cite{Tur,Bro,Kos,Fon}.

Let $\Omega={\Bbb Z_3}\times {\Bbb R}$, and consider the (infinite) orgraph $\Gamma_K=(\Omega,E_K)$ on $\Omega$ given by
$$
E_K\df \set{\langle (a,x),(b,y)\rangle}{(x+y<0 \land b=a+1) \lor (x+y>0 \land b=a-1)}.
$$

\begin{fact} \label{fact1}
$\Gamma_K$ is $\vec C_4$-free.
\end{fact}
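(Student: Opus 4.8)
The plan is to argue by contradiction: suppose $\Gamma_K$ contains an induced copy of $\vec C_4$, say on vertices $v_0 = (a_0, x_0)$, $v_1 = (a_1, x_1)$, $v_2 = (a_2, x_2)$, $v_3 = (a_3, x_3)$ with the four oriented edges $v_0 \to v_1 \to v_2 \to v_3 \to v_0$ present and the two ``diagonals'' $\{v_0, v_2\}$ and $\{v_1, v_3\}$ carrying no edge in either direction. First I would record what it means for a pair to carry \emph{no} edge: by the definition of $E_K$, for a pair $(a,x), (b,y)$ with $a \neq b$, the absence of an edge forces the second coordinates to satisfy $x + y = 0$ together with $b = a \pm 1$, or else $b \notin \{a-1, a+1\}$ (i.e.\ $b = a$ in $\integers_3$); and if $a = b$ there is never an edge. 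So a non-adjacent pair of distinct vertices has one of two ``types'': same $\integers_3$-coordinate, or adjacent $\integers_3$-coordinates with opposite reals.

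Next I would read off the constraints from the four present edges. Along each directed edge $v_i \to v_{i+1}$ we get $x_i + x_{i+1} < 0$ with $a_{i+1} = a_i + 1$, \emph{or} $x_i + x_{i+1} > 0$ with $a_{i+1} = a_i - 1$; in particular each consecutive pair has $a_{i+1} = a_i \pm 1 \neq a_i$ in $\integers_3$, and $x_i + x_{i+1} \neq 0$. The key step is the case analysis on the cyclic sequence of signs: since $\integers_3$ has only three elements and $a_{i+1} \ne a_i$, the walk $a_0, a_1, a_2, a_3, a_0$ of length four in the triangle $\integers_3$ must reverse direction at least once, so there is an index $i$ with $a_{i-1} = a_{i+1}$ (the walk goes out and comes back), which means $a_{i-1}$ and $a_{i+1}$ are equal — but $v_{i-1}$ and $v_{i+1}$ form one of the two diagonals, and equal $\integers_3$-coordinates is exactly the first ``non-adjacency type.'' That is consistent so far, so the contradiction has to come from the real coordinates: I would combine the strict sign inequalities on the four edges around the cycle (whose directions are now pinned down by where the walk in $\integers_3$ turns) with the equalities $x_{i-1} + x_{i+1} = 0$ forced on whichever diagonal(s) are of the ``opposite reals'' type, or with the consequences of the walk structure for the other diagonal, and derive $0 < 0$ by adding and subtracting the appropriate inequalities. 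The bookkeeping reduces, up to the obvious symmetries (rotating the cycle, reflecting it, translating the $\integers_3$-coordinate, negating all reals — note $E_K$ is invariant under $x \mapsto -x$ composed with $a \mapsto -a$), to a small number of sign patterns for $(a_0, a_1, a_2, a_3)$.

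The main obstacle I anticipate is purely organizational rather than conceptual: making sure the case split on the shape of the closed walk $a_0 \to a_1 \to a_2 \to a_3 \to a_0$ in $\integers_3$ is exhaustive and that in every surviving case one really does get an incompatible system of strict inequalities and equalities on the $x_i$'s. Exploiting the symmetry group above aggressively should cut this to essentially one or two representative cases (for instance the ``there-and-back-twice'' pattern $a, a+1, a, a+1$ versus the ``go around then backtrack'' pattern $a, a+1, a+2, a+1$), each of which is a one-line linear-arithmetic contradiction once the edge directions are substituted. No deeper idea seems to be needed, since $\vec C_4$-freeness here is a finite local condition and $\integers_3$ is so small.
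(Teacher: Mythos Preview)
Your plan is sound in spirit and in fact more careful than the paper's, but when you carry out the case split you will discover that one branch does \emph{not} close. For the walk pattern $(a_0,a_1,a_2,a_3)=(a,a+1,a+2,a+1)$ the diagonal $\{v_0,v_2\}$ has distinct $\integers_3$-coordinates, so non-adjacency forces $x_0+x_2=0$; together with the four edge inequalities $x_0+x_1<0$, $x_1+x_2<0$, $x_2+x_3>0$, $x_3+x_0>0$ this system is \emph{consistent}: take $x_0=x_2=0$, $x_1=-1$, $x_3=1$. One checks directly that $(0,0),(1,-1),(2,0),(1,1)$ is an honest induced $\vec C_4$ in $\Gamma_K$. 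So the statement is literally false on all of $\integers_3\times\reals$, and the paper's ``must clearly be of the form $(a,x_1),(a{+}1,y_1),(a,x_2),(a{+}1,y_2)$'' glosses over exactly this case.

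The fix is that the fact is only ever used (and only true) for the restrictions $\Gamma_K|_{\integers_3\times S}$ with $x+y\neq 0$ for all $x,y\in S$, introduced just before Fact~\ref{fact3}. Under that hypothesis any two vertices with distinct $\integers_3$-coordinates are adjacent, so both diagonals of a putative $\vec C_4$ must have equal first coordinates; this forces $a_0=a_2$ and $a_1=a_3$, hence the single pattern $a,a{+}1,a,a{+}1$ (up to the symmetry you mention). Your case analysis then collapses to the paper's two-line argument: edges $v_0\to v_1$ and $v_2\to v_3$ give $(x_0+x_1)+(x_2+x_3)<0$, while $v_1\to v_2$ and $v_3\to v_0$ give the same sum $>0$. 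So once you add the missing hypothesis, your approach and the paper's coincide.
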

\begin{proof}
Any $\vec C_4$-cycle in $\Gamma_K$ must clearly be of the form $ (a,x_1),(a+1,y_1),(a,x_2),(a+1,y_2)$ for some $a\in {\Bbb Z}_3$. But then $\langle (a,x_1),(a+1,y_1)\rangle \in E_K$ and $\langle (a,x_2),(a+1,y_2)\rangle \in E_K$ imply $x_1+y_1+x_2+y_2=(x_1+y_1)+(x_2+y_2)<0$, and, by considering the other two edges, $x_1+y_1+x_2+y_2>0$. Contradiction.
\end{proof}

Let now $\Gamma=(V,E)$ be an arbitrary (possibly infinite) orgraph. We define $FDF(\Gamma)$ as the 3-graph on $V$ in which $(u,v,w)$ spans an edge if and only if $\Gamma|_{\{u,v,w\}}$ either contains an isolated vertex of both in-degree and out-degree 0, or it contains a vertex of out-degree 2.

\begin{fact}[\cite{Fon}]
If $\Gamma$ is an arbitrary $\vec C_4$-free orgraph then the 3-graph $FDF(\Gamma)$ is Tur\'an.
\end{fact}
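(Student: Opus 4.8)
The plan is to show that if $\Gamma$ is a $\vec C_4$-free orgraph then $FDF(\Gamma)$ contains no induced copy of $I^3_4$, i.e. among any four vertices $\{a,b,c,d\}$ of $\Gamma$ at least one triple spans an edge of $FDF(\Gamma)$. Equivalently, I must rule out the situation where the induced orgraph $\Gamma|_{\{a,b,c,d\}}$ has the property that \emph{every} one of its four 3-vertex induced subgraphs fails to be one of $I_3, \bar P_3, \vec K_{1,2}, \vec T_3$; by inspecting Figure~\ref{m3}, this means every 3-vertex induced subgraph is one of $\bar P_3$'s complementary shapes: $\vec P_3$ (directed path), $\vec K_{2,1}$ (in-star), or $\vec C_3$ (directed triangle). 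I will call such an orgraph on four vertices \emph{bad} and derive a contradiction with $\vec C_4$-freeness.

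The first step is purely combinatorial case analysis on the out-degree sequence of a bad 4-vertex orgraph. Since $\vec P_3$, $\vec K_{2,1}$ and $\vec C_3$ each have exactly two edges, every 3-vertex induced subgraph of a bad orgraph has exactly two edges; summing over the four 3-subsets (each edge is counted in two of them) shows the whole orgraph has exactly $4$ edges, hence the out-degrees sum to $4$. Moreover in none of the three allowed 3-vertex shapes is there a vertex of out-degree $2$ (that is precisely what excludes $\vec T_3$ and $\vec K_{1,2}$), so no vertex of a bad orgraph can have out-degree $\geq 2$ inside any triple that contains it — I will use this to bound out-degrees: a vertex of out-degree $3$ in the 4-graph would have out-degree $2$ in some triple, impossible; similarly I will pin down that every vertex has out-degree exactly $1$, so the orgraph is a functional graph (each vertex has a unique out-neighbour) on $4$ vertices with $4$ edges.

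Next I analyze functional digraphs on four vertices with all out-degrees $1$: the functional graph of the map $f:\{a,b,c,d\}\to\{a,b,c,d\}$, $v\mapsto$ its out-neighbour, has no fixed points (loops are forbidden) and no $2$-cycles (since $\Gamma$ is an orgraph). The only such functions are: a single $4$-cycle; a $3$-cycle with the fourth vertex mapping into it; or — ruled out already since all out-degrees are $1$ and there are $4$ edges, there is no room for two disjoint short cycles because $2$-cycles are forbidden. In the $3$-cycle case, the three cycle vertices induce $\vec C_3$ (fine), but the fourth vertex $d$ with $d\to x$ for some cycle vertex $x$: consider the triple $\{d, x, f(x)\}$ — it has edges $d\to x$ and $x\to f(x)$, which is $\vec P_3$, allowed; but consider $\{d, f^{-1}(x), x\}$ where $f^{-1}(x)$ is the cycle-predecessor of $x$: this triple has edges $d\to x$ and $f^{-1}(x)\to x$, giving $\vec K_{2,1}$ into $x$, also allowed; so I will have to push further and look at the triple $\{d, x, y\}$ for the remaining cycle vertex $y$ to force a contradiction or else conclude we are in the $4$-cycle case. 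In the $4$-cycle case $a\to b\to c\to d\to a$, I claim this induced orgraph actually \emph{is} an induced $\vec C_4$ (one checks the two ``diagonal'' pairs $\{a,c\}$ and $\{b,d\}$ carry no edge, since adding any diagonal edge would create a vertex of out-degree $2$, which we have excluded), contradicting $\vec C_4$-freeness. I expect the main obstacle to be the bookkeeping in the $3$-cycle-plus-pendant case: showing it cannot be bad requires checking all four triples carefully and noticing that whichever way the pendant edge points, some triple realizes a forbidden shape ($\vec T_3$ or $\vec K_{1,2}$ or $I_3$) — this is a finite but slightly fiddly enumeration, and getting the orientation conventions of Figure~\ref{m3} exactly right (which vertex has out-degree $2$ in $\vec T_3$ versus $\vec K_{1,2}$) is where care is needed.

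Finally I should double-check that the definition of $FDF(\Gamma)$ given in the text (``$(u,v,w)$ spans an edge iff $\Gamma|_{\{u,v,w\}}$ contains an isolated vertex, or contains a vertex of out-degree $2$'') matches the list $I_3,\bar P_3,\vec K_{1,2},\vec T_3$: an isolated vertex in a $3$-vertex orgraph forces the shape to be $I_3$ or $\bar P_3$, and a vertex of out-degree $2$ forces $\vec K_{1,2}$ or $\vec T_3$, and conversely — so the two descriptions agree, and the case analysis above is exactly the statement that a $\vec C_4$-free $\Gamma$ admits no $4$-vertex induced subgraph all of whose triples avoid both conditions. This completes the proof that $FDF(\Gamma)$ is Tur\'an.
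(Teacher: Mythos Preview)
The paper does not give its own proof of this fact; it simply cites \cite{Fon}. So there is nothing to compare against, and the question is just whether your argument is correct. It is almost right, but there is a genuine slip that breaks the written argument and leaves the 3-cycle case dangling.

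Your edge-counting step asserts that each of $\vec P_3$, $\vec K_{2,1}$, $\vec C_3$ has exactly two edges. This is false: $\vec C_3$ has three edges. Consequently the conclusion ``the bad orgraph has exactly four edges'' does not follow the way you wrote it, and your subsequent claim ``every vertex has out-degree exactly $1$'' is left without justification (you only argue out-degree $\leq 1$, which \emph{is} correct: none of the three non-edge shapes has a vertex of out-degree $2$).

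The fix is to use the other half of the $FDF$ definition, which you state but never exploit: none of $\vec P_3$, $\vec K_{2,1}$, $\vec C_3$ has an isolated vertex. Hence in a bad $4$-vertex orgraph every vertex has, in every triple containing it, an incident edge; equivalently every vertex has undirected degree $\geq 2$. Combined with out-degree $\leq 1$ for each of the four vertices, the edge count $m$ satisfies $4\leq m\leq 4$, so $m=4$, every out-degree equals $1$, \emph{and} every undirected degree equals $2$. A $2$-regular simple graph on four vertices is a $4$-cycle, and with all out-degrees $1$ it is $\vec C_4$ --- contradiction. Note that this degree argument rules out the $3$-cycle-plus-pendant configuration outright (the pendant vertex has undirected degree $1$), so the ``slightly fiddly enumeration'' you were bracing for never needs to be carried out.
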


Let now $S\subseteq {\Bbb R}$ be a finite subset such that $x+y\neq 0$ for any $x,y\in S$.

\begin{fact}[\cite{Fon}] \label{fact3}
The edge density of $FDF\of{\Gamma_K|_{{\Bbb Z}_3\times S}}$ tends to $4/9$ as $|S|\rightarrow\infty$.
\end{fact}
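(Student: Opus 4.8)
The plan is to compute the edge density of $FDF\of{\Gamma_K|_{{\Bbb Z}_3\times S}}$ directly by a counting argument over triples, exploiting the product structure $\Omega = {\Bbb Z}_3\times{\Bbb R}$ and letting $|S|\to\infty$ so that the effect of the partition of $S$ into ``positive-sum'' and ``negative-sum'' pairs averages out. First I would fix $n=|S|$ and observe that the vertex set has $3n$ vertices, so the denominator is $\binom{3n}{3}\sim \frac{(3n)^3}{6} = \frac{9n^3}{2}$. Then I would classify an unordered triple $\{(a_1,x_1),(a_2,x_2),(a_3,x_3)\}$ by the multiset of first coordinates: either all three coordinates are equal (a ``monochromatic'' triple), exactly two are equal (a ``bichromatic'' triple), or all three are distinct (a ``rainbow'' triple). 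A monochromatic triple induces the empty orgraph $I_3$ in $\Gamma_K$ (no edges run within a single copy of ${\Bbb R}$), hence always spans an edge of $FDF$; there are $3\binom{n}{3}\sim \frac{3n^3}{6}=\frac{n^3}{2}$ of these, contributing density $\frac19$ to the limit. A bichromatic triple has two vertices in some part $\{a\}\times S$ and one in $\{a\pm1\}\times S$; since again there is no edge between the two same-part vertices, the induced orgraph is one of $I_3$ (the lone vertex sees neither of the other two — impossible here if the third vertex forms edges, so rather) — more carefully, it is determined by the two potential arcs between the singleton vertex and the pair, so it is $I_3$, $\bar P_3$, $\vec K_{1,2}$, or $\vec K_{2,1}$. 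By the definition of $FDF$, the triple spans an edge unless the induced orgraph is $\vec P_3$, $\vec C_3$, or $\vec T_3$ — none of which can occur here — so \emph{every} bichromatic triple spans an edge as well; there are $3\cdot 2\cdot\binom{n}{2}\cdot n \sim 6\cdot\frac{n^2}{2}\cdot n = 3n^3$ of them, contributing density $\frac{3n^3}{9n^3/2}=\frac23$ in the limit. Wait — that already exceeds $4/9$, so the rainbow triples must on average \emph{fail} to span an edge often enough to pull the total down; I would therefore recount the bichromatic case, noting that among $\{(a,x),(a,y),(a+1,z)\}$ the two relevant arcs go between $(a+1,z)$ and each of $(a,x),(a,y)$, and whether each arc is present and in which direction depends on the signs of $x+z$ and $y+z$; in particular the configuration $\vec K_{1,2}$ (the singleton being a source or sink of both arcs) does arise, and crucially the case where \emph{exactly one} arc is present (giving $\bar P_3$ — edge) versus the case of two arcs forming $\vec P_3$ through the singleton... but $(a,x),(a,y)$ are non-adjacent, so a $\vec P_3$ would need its middle vertex to be $(a+1,z)$, i.e. one arc into $z$ and one out — that is exactly $\vec P_3$, which does \emph{not} span an edge. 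So the honest statement is: a bichromatic triple spans an edge iff $x+z$ and $y+z$ have the same sign (giving $\vec K_{1,2}$ or $\vec K_{2,1}$, since both arcs point the same way relative to $z$) or iff at most ... hm, $x+z>0, y+z<0$ gives $(a,x)\to(a-1)$? no wait, $z$ is in part $a+1$. Let me not grind this here.

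The genuine heart of the computation is the \textbf{rainbow case}, and I expect it to be the main obstacle. Here the three vertices lie in the three distinct parts $\{0\}\times S$, $\{1\}\times S$, $\{2\}\times S$, say at heights $x_0,x_1,x_2$. Between parts $a$ and $a+1$ the arc direction is governed by the sign of the sum of the two heights (arc points from $a$ to $a+1$ if the sum is negative, from $a+1$ to $a$ if positive), so the induced orgraph on the triple is completely determined by the three signs $\sigma_{01}=\mathrm{sgn}(x_0+x_1)$, $\sigma_{12}=\mathrm{sgn}(x_1+x_2)$, $\sigma_{20}=\mathrm{sgn}(x_2+x_0)$, each in $\{+,-\}$ (no zero sums, by the hypothesis on $S$). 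Each choice of signs yields an orientation of the triangle $K_3$; it is a $\vec C_3$ (which does \emph{not} span an edge) precisely for the two cyclic sign patterns, and a transitive tournament $\vec T_3$ (which also does not span an edge... wait, $\vec T_3$ is on the edge list of Figure \ref{m3}) — I need to recheck against the ``out-degree 2'' criterion: a vertex of out-degree 2 exists iff the tournament is \emph{not} a directed 3-cycle, i.e. in the transitive case, so rainbow triples that induce a transitive tournament \emph{do} span an edge and only the $\vec C_3$ ones fail. Thus the rainbow contribution to the numerator is the number of ordered-height triples $(x_0,x_1,x_2)\in S^3$ whose sign pattern $(\sigma_{01},\sigma_{12},\sigma_{20})$ is acyclic, times the appropriate symmetry factor, and the density contribution in the limit is $\frac{1}{9}\cdot\lim_{n\to\infty}\frac{\#\{\text{acyclic rainbow triples}\}}{n^3}\cdot(\text{const})$.

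To finish, I would show that as $|S|\to\infty$ the \emph{fraction} of triples $(x,y,z)\in S^3$ (distinct entries) that are ``bad'' (induce $\vec C_3$) can be made arbitrarily close to any target in a controlled way, and that combined with the weights $\tfrac19$ (mono), the bichromatic weight, and $\tfrac19$ times the acyclic rainbow fraction, the total converges to $4/9$. Concretely, writing $p$ for the limiting probability that three i.i.d.\ samples from the empirical distribution of $S$ form a bad cyclic pattern and $q$ for the corresponding bichromatic ``fail'' probability, the limiting density is a fixed affine expression in $p,q$; I would then exhibit that for the specific sets $S$ that make $FDF(\Gamma_K|_{{\Bbb Z}_3\times S})$ an extremal example (the ones from \cite{Tur,Bro,Kos}) the empirical distribution is chosen so that this expression equals exactly $4/9$, and more robustly that for \emph{any} $S$ with $x+y\neq0$ the expression lies in $[4/9-o(1), \ldots]$ — actually the clean statement is just that it \emph{tends to} $4/9$, which by the affine formula reduces to checking that the relevant sign-pattern probabilities tend to fixed values dictated by the symmetry $x\mapsto -x$ being a measure-preserving involution on $S$ when $S$ is chosen symmetric, or more generally by a direct evaluation. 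The one step I am least sure of without writing out the bookkeeping is getting the bichromatic weight and the rainbow weight exactly right (including the $\bar P_3$ vs.\ $\vec P_3$ distinction and the tournament orientation count against the precise $FDF$ rule), since a sign error there shifts the answer away from $4/9$; that verification, essentially a finite case check over the orgraphs in Figure \ref{m3}, is where I would be most careful.
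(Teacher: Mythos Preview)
The paper does not actually prove Fact~\ref{fact3}; it is stated with a citation to \cite{Fon} and left unproved. So there is no ``paper's own proof'' to compare against, and your proposal is an attempt at an independent argument. Unfortunately, it has a real gap.

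Your decomposition into monochromatic, bichromatic, and rainbow triples is natural, but the individual counts you are aiming for \emph{depend on $S$}: the number of bichromatic edges involves $\sum_{z\in S}\binom{P(z)}{2}$ and $\sum_{z\in S}\binom{N(z)}{2}$ (with $P(z)=|\{y\in S:y+z>0\}|$, $N(z)=n-P(z)$), and the rainbow $\vec T_3$ count involves the number of triples in $S^3$ whose pairwise sums do not all share a sign. Neither of these is determined by $n=|S|$ alone, and your closing appeal to ``the symmetry $x\mapsto -x$'' does not help, since $S$ is completely arbitrary. Your bichromatic case also contains an error: when $x+z$ and $y+z$ have the same sign the induced orgraph is $\vec K_{1,2}$ \emph{or} $\vec K_{2,1}$, but only the former is an $\FDF$-edge (no isolated vertex, no out-degree-$2$ vertex in $\vec K_{2,1}$).

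What you are missing is a single observation that makes the whole computation collapse: every vertex of $\Gamma_K|_{{\Bbb Z}_3\times S}$ has out-degree exactly $n$. Indeed, the out-neighbours of $(a,x)$ are $\{(a+1,y):y<-x\}\cup\{(a-1,y):y>-x\}$, and since $-x\notin S$ (else $x+(-x)=0$), these two sets partition $S$. Now in \emph{any} orgraph, $\sum_v\binom{d^+(v)}{2}$ counts exactly the $3$-vertex induced subgraphs isomorphic to $\vec K_{1,2}$ or $\vec T_3$ (each has a unique vertex of out-degree $2$). Hence
\[
|\vec K_{1,2}|+|\vec T_3|=3n\binom{n}{2},
\]
independently of $S$. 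Together with $|I_3|=3\binom{n}{3}$ and $|\bar P_3|=0$ (the underlying graph is complete tripartite), the total number of $\FDF$-edges is $3\binom{n}{3}+3n\binom{n}{2}=n(n-1)(2n-1)$, and dividing by $\binom{3n}{3}$ gives $\dfrac{2(n-1)(2n-1)}{(3n-1)(3n-2)}\to\dfrac{4}{9}$. This is the step that replaces all of your sign-pattern bookkeeping; without it, your approach will not close.
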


Facts \ref{fact1}-\ref{fact3} together imply that $\pi_{\min}(I^3_4)\leq 4/9$. Tur\'an's original construction \cite{Tur} corresponds to the case when all elements in $S$ are positive. Brown's examples \cite{Bro} are obtained when negative entries in $S$ are allowed, but are always smaller in absolute value than all positive entries. Kostochka's examples \cite{Kos} correspond to arbitrary sets $S$. While tiny variations of these examples are known in terms of conjectured exact values $\text{ex}_{\min}(n; I^3_4)$, asymptotically Tur\'an 3-graphs of the form $FDF(\Gamma_K|{{\Bbb Z}_3\times S})$ constitute the complete set of known extremal configurations. Statements of this sort are best formulated and viewed in a ``limit'' framework, and we defray further discussion until we review some material from Flag Algebras.

\subsection{Flag Algebras}

As we stated in Introduction, we do not attempt to give here a self-contained account of the general theory developed in \cite{flag}. The sole purpose of this section is to introduce a little bit more of handy notation {\em in addition} to \cite{flag} and fix names (many of them are listed on the corresponding pictures) for concrete theories, interpretations, types, flags etc. used in the rest of the paper.

\subsubsection{Add-ons} \label{addons}

We will call an open interpretation $(U,I):T_1^{\sigma_1}\leadsto T_2^{\sigma_2}$ (\cite[Definition 4]{flag}) {\em total} if the relativizing predicate $U$ is trivial, that is $U(x)\equiv\top$. All interpretations considered in this paper are total, and we omit $U$ from all definitions and notation. A total interpretation $I$ is {\em global} if additionally $\sigma_1=\sigma_2=0$ are trivial types (global interpretations were considered in \cite[\S 2.3.3]{flag}). For a global interpretation $I:T_1\leadsto T_2$ and a type $\sigma$ of the theory $T_2$, we have the induced interpretation $I^\sigma:T_2^{I(\sigma)}\leadsto T_1^\sigma$.

Any total interpretation $I:T_1^{\sigma_1}\leadsto T_2^{\sigma_2}$ defines an algebra homomorphism $\pi^I\function{\scr A^{\sigma_1}[T_1]}{\scr A^{\sigma_2}[T_2]}$ (\cite[Theorem 2.6]{flag}). If $I$ is global and $\sigma$ is a type of $T_2$, $\pi^{(I^\sigma)}$ will be abbreviated to $\pi^{I,\sigma}$. For any total interpretation $I:T_1^{\sigma_1}\leadsto T_2^{\sigma_2}$ and $\phi\in\Hom^+(\scr A^{\sigma_2}[T_2], {\Bbb R})$, we abbreviate $\phi\pi^I\in \Hom^+(\scr A^{\sigma_1}[T_1], {\Bbb R})$ to $\phi|_I$. Likewise, for $\phi\in\Hom^+(\scr A^\sigma,{\Bbb R})$ and an injective mapping $\eta\function{[k']}{[|\sigma|]}$, $\phi\pi^{\sigma,\eta}\in\Hom^+(\scr A^\sigma, {\Bbb R})$ (\cite[\S 2.3.1]{flag}) will be abbreviated to $\phi|_\eta$. $\eta$ itself will be normally written down as its table $[\eta(1),\ldots,\eta(k')]$ and, moreover, when $k'=1$, the brackets will be omitted. For example, for $\phi\in\Hom^+(\scr A^\sigma, {\Bbb R})$, $\phi|_2$ means the composed homomorphism $\phi\pi^{\sigma,2}\df\phi\pi^{\sigma,\eta}$, where $\eta\function{[1]}{[|\sigma|]};\ \eta(1)=2$.

\smallskip
Recall \cite[Definition 6]{flag} that $f\leq_\sigma g$ for $f,g\in\scr A^\sigma$ means that $\phi(f)\leq \phi(g)$ for any $\phi\in\Hom^+(\scr A^\sigma,{\Bbb R})$. We naturally extend this notation to the case of arbitrary functions $g_1,\ldots,g_\ell,h_1,\ldots,h_\ell\function{{\Bbb R}^t}{{\Bbb R}}$, an arbitrary propositional formula $A(p_1,\ldots,p_\ell)$ and elements $f_1,\ldots,f_t\in\scr A^\sigma$ as follows: $A(g_1(f_1,\ldots,f_t)\geq h_1(f_1,\ldots,f_t),\ldots,g_\ell(f_1,\ldots,f_t)\geq h_\ell(f_1,\ldots,f_t))$ by definition means that $A(g_1(\phi(f_1),\ldots,\phi(f_t))\geq h_1(\phi(f_1),\ldots,\phi(f_t)),\ldots,g_\ell(\phi(f_1),\ldots,\phi(f_t))\geq h_\ell(\phi(f_1),\ldots,\phi(f_t)))$ is true for any $\phi\in\Hom^+(\scr A^\sigma,{\Bbb R})$.

\smallskip
For a non-degenerate type $\sigma_0$ of size $k_0$, $\phi\in\Hom^+(\scr A^{\sigma_0},{\Bbb R})$ and an extension $(\sigma,\eta)$ of $\sigma_0$ with $\phi_0((\sigma,\eta))>0$, we well denote by $S^{\sigma,\eta}(\phi)$ the support of the probability measure ${\bf P}^{\sigma,\eta}$ \cite[Definition 8]{flag}. This is the minimal closed subset of $\Hom^+(\scr A^\sigma,{\Bbb R})$ with the property $\prob{\rn{\phi^{\sigma,\eta}}\in S^{\sigma,\eta}(\phi)}=1$, and, as always, when $\sigma_0=0$, $\eta$ will be dropped from notation.

\subsubsection{Names} \label{names}

In this section, we fix notation for almost all concrete objects used in our paper.

\begin{center}
{\bf Theories.}
\end{center}

$T_{\text{Turan}}$ -- the theory of 3-graphs without copies of $I^3_4$.

$T_{\FDF}$ -- the theory of $\vec C_4$-free orgraphs.

$T_{\Graph}$ -- the theory of ordinary (simple, undirected) graphs.

For a theory $T$, we denote by $T^\ast$ its extension obtained by introducing three new unary predicate symbols $\chi^0,\chi^1,\chi^2$, together with the axioms
$$
\bigvee_{a\in {\Bbb Z}_3}\chi^a(v),\ \ \ \neg(\chi^a(v)\land \chi^b(v))\ (a\neq b\in {\Bbb Z_3}).
$$
In other words, we augment the theory $T$ with a ${\Bbb Z}_3$-coloring $\chi$ of the vertices not a priori related to the original structure provided by $T$.

\begin{center}
{\bf Interpretations.}
\end{center}
For the general set-up see \cite[Section 2.3]{flag}.

$\FDF:T_{\text{Turan}}\leadsto T_{\FDF}$ is the {\em Fon-der-Flaass interpretation}. Denoting the (only) predicate of the theory $T_{\text{Turan}}$ by $E_3$, and the predicate of $T_{\FDF}$ by $E$, we set
\begin{eqnarray*}
&&\FDF(E_3)(v_0,v_1,v_2) \df \bigwedge_{a\neq b\in {\Bbb Z}_3}(v_a\neq v_b) \land \left(\bigvee_{a\in {\Bbb Z}_3}(E(v_a,v_{a+1})\land E(v_a,v_{a-1}))\right.\\ &&\hspace{\parindent} \lor \left.\bigvee _{a\in {\Bbb Z}_3}(\neg E(v_a,v_{a+1})\land \neg E(v_a,v_{a-1})\land \neg E(v_{a-1},v_a)\land \neg E(v_{a+1},v_a))\right).
\end{eqnarray*}

{\em Orientation-erasing interpretation} $\text{OE}:T_{\Graph} \leadsto T_{\FDF}$ is defined in the obvious way. When working in the theory $T_{\FDF}$, we will systematically omit $\pi^{\text{OE}}$ and, for $f\in\scr A^0[T_{\Graph}]$, abbreviate $\pi^{\text{OE}}(f)$ to $f$.

{\em Color-erasing interpretation} $\CE_T:T\leadsto T^\ast$ is also obvious. The subscript $T$ is omitted whenever it is clear from the context.

The generalization $\text{OE}^\ast:T_{\Graph}^\ast \leadsto T_{\FDF}^\ast$ is also straightforward (this interpretation does not affect the ${\Bbb Z}_3$-coloring). Again, given $f\in\scr A^0[T_{\Graph}^\ast]$, we will abbreviate the element $\pi^{\text{OE}^\ast}\pi^{CE_{T_{\Graph}}}(f) = \pi^{\text{CE}_{T_\FDF}}\pi^{\text{OE}}(f)$ to $f$.

All these interpretations are global. We will define one crucial non-global interpretation a little bit later, after introducing some more notation.

\begin{center}
{\bf Models, Types and Flags.}
\end{center}
For any theory $T$, 0 is the trivial type of size 0. In vertex-uniform (\cite[Definition 11]{flag}) theories $T_{\text{Turan}},T_{\FDF},T_{\Graph}$, 1 is the only type of size 1.

\begin{center}
$T_{\text{Turan}}$
\end{center}
$\rho_3\in\scr M_3[T_{\text{Turan}}]$ is an edge.

\begin{center}
$T_{\Graph}$
\end{center}

$\scr M_2[T_{\Graph}]$ consists of two elements: $\rho$ (an edge) and $\nu$ (non-edge).

$|\scr M_3[T_{\Graph}]|=4$. Its elements, along with their adopted names, are listed on Figure \ref{graph3}.

\begin{figure}[tb]
\begin{center}
% This file was made with: LaTeXPiX  (Build 3618)
% Coded by: N.J.H.M. van Beurden
% Email: beurden@email.com
% Webpage: http://www.beurden.cjb.net
\setlength{\unitlength}{0.254mm}
\begin{picture}(354,78)(13,-91)
        \special{color rgb 0 0 0}\allinethickness{0.254mm}\special{sh 0.99}\put(15,-65){\ellipse{4}{4}} % Shade Dot
        \special{color rgb 0 0 0}\allinethickness{0.254mm}\special{sh 0.99}\put(40,-15){\ellipse{4}{4}} % Shade Dot
        \special{color rgb 0 0 0}\allinethickness{0.254mm}\special{sh 0.99}\put(65,-65){\ellipse{4}{4}} % Shade Dot
        \special{color rgb 0 0 0}\allinethickness{0.254mm}\special{sh 0.99}\put(115,-65){\ellipse{4}{4}} % Shade Dot
        \special{color rgb 0 0 0}\allinethickness{0.254mm}\special{sh 0.99}\put(140,-15){\ellipse{4}{4}} % Shade Dot
        \special{color rgb 0 0 0}\allinethickness{0.254mm}\special{sh 0.99}\put(165,-65){\ellipse{4}{4}} % Shade Dot
        \special{color rgb 0 0 0}\put(30,-91){\shortstack{$I_3$}} % Plain Text
        \special{color rgb 0 0 0}\put(130,-91){\shortstack{$\bar P_3$}} % Plain Text
        \special{color rgb 0 0 0}\allinethickness{0.254mm}\special{sh 0.99}\put(215,-65){\ellipse{4}{4}} % Shade Dot
        \special{color rgb 0 0 0}\allinethickness{0.254mm}\special{sh 0.99}\put(240,-15){\ellipse{4}{4}} % Shade Dot
        \special{color rgb 0 0 0}\allinethickness{0.254mm}\special{sh 0.99}\put(265,-65){\ellipse{4}{4}} % Shade Dot
        \special{color rgb 0 0 0}\allinethickness{0.254mm}\special{sh 0.99}\put(315,-65){\ellipse{4}{4}} % Shade Dot
        \special{color rgb 0 0 0}\allinethickness{0.254mm}\special{sh 0.99}\put(340,-15){\ellipse{4}{4}} % Shade Dot
        \special{color rgb 0 0 0}\allinethickness{0.254mm}\special{sh 0.99}\put(365,-65){\ellipse{4}{4}} % Shade Dot
        \special{color rgb 0 0 0}\put(230,-91){\shortstack{$P_3$}} % Plain Text
        \special{color rgb 0 0 0}\allinethickness{0.254mm}\path(215,-65)(240,-15) % Plain Solid Line
        \special{color rgb 0 0 0}\allinethickness{0.254mm}\path(240,-15)(265,-65) % Plain Solid Line
        \special{color rgb 0 0 0}\allinethickness{0.254mm}\path(315,-65)(340,-15) % Plain Solid Line
        \special{color rgb 0 0 0}\allinethickness{0.254mm}\path(340,-15)(365,-65) % Plain Solid Line
        \special{color rgb 0 0 0}\put(330,-91){\shortstack{$K_3$}} % Plain Text
        \special{color rgb 0 0 0}\allinethickness{0.254mm}\path(315,-65)(365,-65) % Plain Solid Line
        \special{color rgb 0 0 0}\allinethickness{0.254mm}\path(115,-65)(165,-65) % Plain Solid Line
        \special{color rgb 0 0 0} % Set color to black again (default font color)
\end{picture}
\caption{\label{graph3} Ordinary graphs on 3 vertices}
\end{center}
\end{figure}
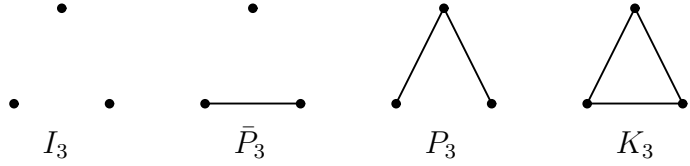

$E$ is the type of size 2 that is an edge, and $N$ is the type of size 2 that is non-edge.

$e\in\scr F^1_2[T_{\Graph}]$ is an edge with one distinguished vertex. There are two ways to turn $\bar P_3$ into an 1-flag: $\bar P_3^{1,c}$ (by selecting the isolated vertex) and $\bar P_3^{1,b}$ (by selecting any other). $\scr F_3^E[T_{\Graph}]$, $\scr F_3^N[T_{\Graph}]$ consist of four elements each; they are depicted on Figures \ref{e3}, \ref{n3} respectively.

\begin{figure}[tb]
\begin{center}
% This file was made with: LaTeXPiX  (Build 3618)
% Coded by: N.J.H.M. van Beurden
% Email: beurden@email.com
% Webpage: http://www.beurden.cjb.net
\setlength{\unitlength}{0.254mm}
\begin{picture}(443,83)(35,-111)
        \special{color rgb 0 0 0}\allinethickness{0.254mm}\special{sh 0.2}\put(80,-80){\ellipse{70}{20}} % Shade Ellipse
        \special{color rgb 0 0 0}\allinethickness{0.254mm}\special{sh 0.2}\put(195,-80){\ellipse{70}{20}} % Shade Ellipse
        \special{color rgb 0 0 0}\allinethickness{0.254mm}\special{sh 0.99}\put(55,-80){\ellipse{4}{4}} % Shade Dot
        \special{color rgb 0 0 0}\allinethickness{0.254mm}\special{sh 0.99}\put(80,-30){\ellipse{4}{4}} % Shade Dot
        \special{color rgb 0 0 0}\allinethickness{0.254mm}\special{sh 0.99}\put(105,-80){\ellipse{4}{4}} % Shade Dot
        \special{color rgb 0 0 0}\allinethickness{0.254mm}\special{sh 0.99}\put(170,-80){\ellipse{4}{4}} % Shade Dot
        \special{color rgb 0 0 0}\allinethickness{0.254mm}\special{sh 0.99}\put(195,-30){\ellipse{4}{4}} % Shade Dot
        \special{color rgb 0 0 0}\allinethickness{0.254mm}\special{sh 0.99}\put(220,-80){\ellipse{4}{4}} % Shade Dot
        \special{color rgb 0 0 0}\put(35,-86){\shortstack{$1$}} % Plain Text
        \special{color rgb 0 0 0}\put(150,-86){\shortstack{$1$}} % Plain Text
        \special{color rgb 0 0 0}\put(120,-86){\shortstack{$2$}} % Plain Text
        \special{color rgb 0 0 0}\put(235,-86){\shortstack{$2$}} % Plain Text
        \special{color rgb 0 0 0}\allinethickness{0.254mm}\path(105,-80)(55,-80) % Plain Solid Line
        \special{color rgb 0 0 0}\allinethickness{0.254mm}\path(195,-30)(220,-80) % Plain Solid Line
        \special{color rgb 0 0 0}\allinethickness{0.254mm}\path(220,-80)(170,-80) % Plain Solid Line
        \special{color rgb 0 0 0}\put(185,-111){\shortstack{$P_3^{E,b}$}} % Plain Text
        \special{color rgb 0 0 0}\put(70,-111){\shortstack{$\bar P_3^E$}} % Plain Text
        \special{color rgb 0 0 0}\allinethickness{0.254mm}\special{sh 0.2}\put(305,-80){\ellipse{70}{20}} % Shade Ellipse
        \special{color rgb 0 0 0}\allinethickness{0.254mm}\special{sh 0.2}\put(420,-80){\ellipse{70}{20}} % Shade Ellipse
        \special{color rgb 0 0 0}\allinethickness{0.254mm}\special{sh 0.99}\put(280,-80){\ellipse{4}{4}} % Shade Dot
        \special{color rgb 0 0 0}\allinethickness{0.254mm}\special{sh 0.99}\put(305,-30){\ellipse{4}{4}} % Shade Dot
        \special{color rgb 0 0 0}\allinethickness{0.254mm}\special{sh 0.99}\put(330,-80){\ellipse{4}{4}} % Shade Dot
        \special{color rgb 0 0 0}\allinethickness{0.254mm}\special{sh 0.99}\put(395,-80){\ellipse{4}{4}} % Shade Dot
        \special{color rgb 0 0 0}\allinethickness{0.254mm}\special{sh 0.99}\put(420,-30){\ellipse{4}{4}} % Shade Dot
        \special{color rgb 0 0 0}\allinethickness{0.254mm}\special{sh 0.99}\put(445,-80){\ellipse{4}{4}} % Shade Dot
        \special{color rgb 0 0 0}\put(260,-86){\shortstack{$1$}} % Plain Text
        \special{color rgb 0 0 0}\put(375,-86){\shortstack{$1$}} % Plain Text
        \special{color rgb 0 0 0}\put(345,-86){\shortstack{$2$}} % Plain Text
        \special{color rgb 0 0 0}\put(460,-86){\shortstack{$2$}} % Plain Text
        \special{color rgb 0 0 0}\allinethickness{0.254mm}\path(280,-80)(305,-30) % Plain Solid Line
        \special{color rgb 0 0 0}\allinethickness{0.254mm}\path(330,-80)(280,-80) % Plain Solid Line
        \special{color rgb 0 0 0}\allinethickness{0.254mm}\path(420,-30)(445,-80) % Plain Solid Line
        \special{color rgb 0 0 0}\allinethickness{0.254mm}\path(445,-80)(395,-80) % Plain Solid Line
        \special{color rgb 0 0 0}\put(410,-111){\shortstack{$K_3$}} % Plain Text
        \special{color rgb 0 0 0}\put(295,-111){\shortstack{$P_3^{E,c}$}} % Plain Text
        \special{color rgb 0 0 0}\allinethickness{0.254mm}\path(395,-80)(420,-30) % Plain Solid Line
        \special{color rgb 0 0 0} % Set color to black again (default font color)
\end{picture}
\caption{\label{e3} $E$-flags on 3 vertices}
\end{center}
\end{figure}
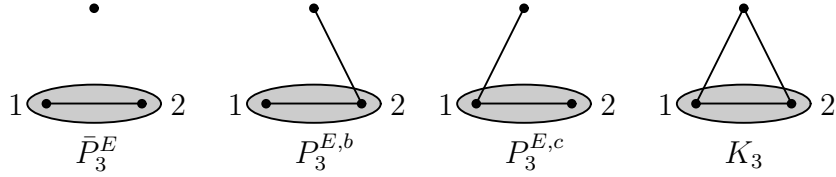

\begin{figure}[tb]
\begin{center}
% This file was made with: LaTeXPiX  (Build 3618)
% Coded by: N.J.H.M. van Beurden
% Email: beurden@email.com
% Webpage: http://www.beurden.cjb.net
\setlength{\unitlength}{0.254mm}
\begin{picture}(443,83)(35,-111)
        \special{color rgb 0 0 0}\allinethickness{0.254mm}\special{sh 0.2}\put(80,-80){\ellipse{70}{20}} % Shade Ellipse
        \special{color rgb 0 0 0}\allinethickness{0.254mm}\special{sh 0.2}\put(195,-80){\ellipse{70}{20}} % Shade Ellipse
        \special{color rgb 0 0 0}\allinethickness{0.254mm}\special{sh 0.99}\put(55,-80){\ellipse{4}{4}} % Shade Dot
        \special{color rgb 0 0 0}\allinethickness{0.254mm}\special{sh 0.99}\put(80,-30){\ellipse{4}{4}} % Shade Dot
        \special{color rgb 0 0 0}\allinethickness{0.254mm}\special{sh 0.99}\put(105,-80){\ellipse{4}{4}} % Shade Dot
        \special{color rgb 0 0 0}\allinethickness{0.254mm}\special{sh 0.99}\put(170,-80){\ellipse{4}{4}} % Shade Dot
        \special{color rgb 0 0 0}\allinethickness{0.254mm}\special{sh 0.99}\put(195,-30){\ellipse{4}{4}} % Shade Dot
        \special{color rgb 0 0 0}\allinethickness{0.254mm}\special{sh 0.99}\put(220,-80){\ellipse{4}{4}} % Shade Dot
        \special{color rgb 0 0 0}\put(35,-86){\shortstack{$1$}} % Plain Text
        \special{color rgb 0 0 0}\put(150,-86){\shortstack{$1$}} % Plain Text
        \special{color rgb 0 0 0}\put(120,-86){\shortstack{$2$}} % Plain Text
        \special{color rgb 0 0 0}\put(235,-86){\shortstack{$2$}} % Plain Text
        \special{color rgb 0 0 0}\allinethickness{0.254mm}\path(195,-30)(220,-80) % Plain Solid Line
        \special{color rgb 0 0 0}\put(185,-111){\shortstack{$\bar P_3^{N,c}$}} % Plain Text
        \special{color rgb 0 0 0}\put(70,-111){\shortstack{$I_3^N$}} % Plain Text
        \special{color rgb 0 0 0}\allinethickness{0.254mm}\special{sh 0.2}\put(305,-80){\ellipse{70}{20}} % Shade Ellipse
        \special{color rgb 0 0 0}\allinethickness{0.254mm}\special{sh 0.2}\put(420,-80){\ellipse{70}{20}} % Shade Ellipse
        \special{color rgb 0 0 0}\allinethickness{0.254mm}\special{sh 0.99}\put(280,-80){\ellipse{4}{4}} % Shade Dot
        \special{color rgb 0 0 0}\allinethickness{0.254mm}\special{sh 0.99}\put(305,-30){\ellipse{4}{4}} % Shade Dot
        \special{color rgb 0 0 0}\allinethickness{0.254mm}\special{sh 0.99}\put(330,-80){\ellipse{4}{4}} % Shade Dot
        \special{color rgb 0 0 0}\allinethickness{0.254mm}\special{sh 0.99}\put(395,-80){\ellipse{4}{4}} % Shade Dot
        \special{color rgb 0 0 0}\allinethickness{0.254mm}\special{sh 0.99}\put(420,-30){\ellipse{4}{4}} % Shade Dot
        \special{color rgb 0 0 0}\allinethickness{0.254mm}\special{sh 0.99}\put(445,-80){\ellipse{4}{4}} % Shade Dot
        \special{color rgb 0 0 0}\put(260,-86){\shortstack{$1$}} % Plain Text
        \special{color rgb 0 0 0}\put(375,-86){\shortstack{$1$}} % Plain Text
        \special{color rgb 0 0 0}\put(345,-86){\shortstack{$2$}} % Plain Text
        \special{color rgb 0 0 0}\put(460,-86){\shortstack{$2$}} % Plain Text
        \special{color rgb 0 0 0}\allinethickness{0.254mm}\path(280,-80)(305,-30) % Plain Solid Line
        \special{color rgb 0 0 0}\allinethickness{0.254mm}\path(420,-30)(445,-80) % Plain Solid Line
        \special{color rgb 0 0 0}\put(410,-111){\shortstack{$P_3^N$}} % Plain Text
        \special{color rgb 0 0 0}\put(295,-111){\shortstack{$\bar P_3^{N,b}$}} % Plain Text
        \special{color rgb 0 0 0}\allinethickness{0.254mm}\path(395,-80)(420,-30) % Plain Solid Line
        \special{color rgb 0 0 0} % Set color to black again (default font color)
\end{picture}
\caption{\label{n3} $N$-flags on 3 vertices}
\end{center}
\end{figure}

\begin{center}
$T_{\FDF}$
\end{center}

$\alpha\in\scr F^1_2[T_{\FDF}]$ is a directed edge in which the tail vertex is labeled by 1. $A$ is the type of size 2 with $E(A)=\{\langle 1,2\rangle\}$, and $B$ is the type of size 2 with $E(B)=\{\langle 2,1\rangle\}$.

\begin{center}
$T_{\Graph}^\ast$
\end{center}

$p_a\in\scr M_1[T_{\Graph}^\ast]$ is the one-vertex model colored by $a\in {\Bbb Z}_3$. $\rho_a [\nu_a] \in\scr M_2[T_{\Graph}^\ast]$ is an edge [none-edge, respectively], in which both ends are colored by $a$, and  $\rho_{a,b} [\nu_{a,b}] \in\scr M_2[T_{\Graph}^\ast]$ is an edge [none-edge, respectively], in which the two ends are colored by $a\neq b\in {\Bbb Z}_3$.

\begin{center}
$T_{\FDF}^\ast$
\end{center}

For $a\in {\Bbb Z}_3$, $(a)$ is the type of size 1 based on $p_a$.
For $a=1,2$, $\alpha_{a,3-a}\in \scr F^{(a)}_2$ is a directed edge in which the tail is colored by $a$ and is labeled by 1, and the head is colored by $3-a$.

$N_a$ is the type of size 2 based on $\nu_a$. In the flag $\vec P_3^{N_a}\in \scr F^{N_a}_3$, the only free vertex $v$ is colored by $(3-a)$, and it has the edges $\langle \theta(1),v\rangle, \langle v,\theta(2)\rangle$. $\vec P_3^{N_a,\ast}\in \scr F^{N_a}_3$ is obtained from $\vec P_3^{N_a}$ by reversing edges.

\begin{center}
{\bf Interpretations $C,OC$}
\end{center}

One of our main tools in this paper is the (total) interpretation $\C:T_{\Graph}^\ast\leadsto T_{\Graph}^E$ that acts identically on $T_{\Graph}$ and, combinatorially, given an edge $E$ of our graph, colors all other vertices in 3 colors according to Figure \ref{c_int} (dotted lines mean the absence of an edge).
\begin{figure}[tb]
\begin{center}
% This file was made with: LaTeXPiX  (Build 3618)
% Coded by: N.J.H.M. van Beurden
% Email: beurden@email.com
% Webpage: http://www.beurden.cjb.net
\setlength{\unitlength}{0.254mm}
\begin{picture}(238,345)(115,-370)
        \special{color rgb 0 0 0}\allinethickness{0.254mm}\special{sh 0.2}\put(210,-200){\ellipse{70}{20}} % Shade Ellipse
        \special{color rgb 0 0 0}\allinethickness{0.254mm}\special{sh 0.99}\put(185,-200){\ellipse{4}{4}} % Shade Dot
        \special{color rgb 0 0 0}\allinethickness{0.254mm}\special{sh 0.99}\put(235,-200){\ellipse{4}{4}} % Shade Dot
        \special{color rgb 0 0 0}\put(160,-206){\shortstack{$c_1$}} % Plain Text
        \special{color rgb 0 0 0}\put(250,-206){\shortstack{$c_2$}} % Plain Text
        \special{color rgb 0 0 0}\allinethickness{0.254mm}\path(235,-200)(185,-200) % Plain Solid Line
        \special{color rgb 0 0 0}\allinethickness{0.254mm}\special{sh 0.1}\put(150,-87){\ellipse{70}{125}} % Shade Ellipse
        \special{color rgb 0 0 0}\allinethickness{0.254mm}\special{sh 0.99}\put(150,-80){\ellipse{4}{4}} % Shade Dot
        \special{color rgb 0 0 0}\allinethickness{0.254mm}\path(150,-80)(235,-200) % Plain Solid Line
        \special{color rgb 0 0 0}\allinethickness{0.254mm}\dottedline{5}(150,-80)(185,-200) % Plain Dotted Line
        \special{color rgb 0 0 0}\put(130,-66){\shortstack{\Large $\chi_1$}} % Plain Text
        \special{color rgb 0 0 0}\allinethickness{0.254mm}\special{sh 0.1}\put(295,-87){\ellipse{70}{125}} % Shade Ellipse
        \special{color rgb 0 0 0}\put(275,-66){\shortstack{\Large $\chi_2$}} % Plain Text
        \special{color rgb 0 0 0}\allinethickness{0.254mm}\special{sh 0.99}\put(295,-80){\ellipse{4}{4}} % Shade Dot
        \special{color rgb 0 0 0}\allinethickness{0.254mm}\dottedline{5}(235,-200)(295,-80) % Plain Dotted Line
        \special{color rgb 0 0 0}\allinethickness{0.254mm}\path(295,-80)(185,-200) % Plain Solid Line
        \special{color rgb 0 0 0}\allinethickness{0.254mm}\special{sh 0.1}\put(210,-307){\ellipse{70}{125}} % Shade Ellipse
        \special{color rgb 0 0 0}\put(190,-341){\shortstack{\Large $\chi_0$}} % Plain Text
        \special{color rgb 0 0 0}\allinethickness{0.254mm}\special{sh 0.99}\put(185,-320){\ellipse{4}{4}} % Shade Dot
        \special{color rgb 0 0 0}\allinethickness{0.254mm}\special{sh 0.99}\put(230,-320){\ellipse{4}{4}} % Shade Dot
        \special{color rgb 0 0 0}\allinethickness{0.254mm}\path(185,-200)(185,-320) % Plain Solid Line
        \special{color rgb 0 0 0}\allinethickness{0.254mm}\path(235,-200)(185,-320) % Plain Solid Line
        \special{color rgb 0 0 0}\allinethickness{0.254mm}\dottedline{5}(235,-200)(230,-320) % Plain Dotted Line
        \special{color rgb 0 0 0}\allinethickness{0.254mm}\dottedline{5}(185,-200)(230,-320) % Plain Dotted Line
        \special{color rgb 0 0 0} % Set color to black again (default font color)
\end{picture}
\caption{\label{c_int} Interpretation $C$}
\end{center}
\end{figure}
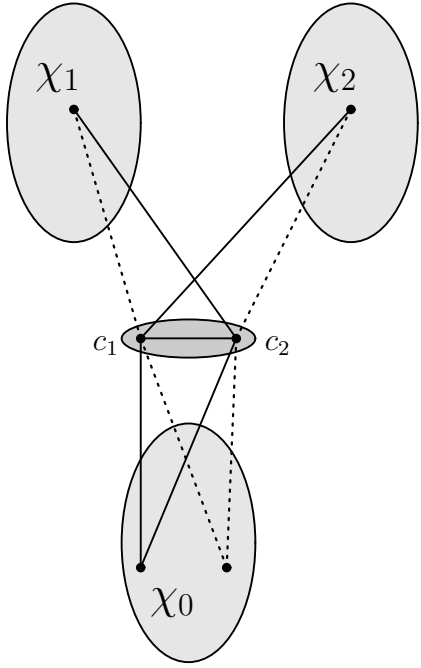
Formally, the predicate symbol $E(v,w)$ of the theory $T_{\Graph}$ is interpreted by itself, and the new symbols $\chi_0,\chi_1,\chi_2$ are interpreted as
\begin{equation} \label{color_introduction}
\longeq{C(\chi_0)(v)&\df& (E(c_1,v)\equiv E(c_2,v) )\\
C(\chi_1)(v) &\df& v\neq c_1\land E(c_2,v)\land \neg E(c_1,v)\\
C(\chi_2)(v) &\df& v\neq c_2\land E(c_1,v)\land \neg E(c_2,v).}
\end{equation}

This construction commutes well with orientation. That is, we have a total interpretation $\text{OC}:T_{\FDF}^\ast\leadsto T_{\FDF}^A$ preserving oriented edges and introducing colors analogously to \refeq{color_introduction} but ignoring orientation on edges (formally, $E(v,w)$ gets replaced by $E(v,w)\lor E(w,v)$). We have the following commutative diagram of interpretations:
\begin{equation} \label{commutative}
\begin{CD}
T_{\Graph}^\ast @>\C>> T_{\Graph}^E\\ @V{\text{OE}^\ast}VV @VV{\text{OE}^A}V\\ T_{\FDF}^\ast @>\text{OC}>> T_{\FDF}^A
\end{CD}
\end{equation}

\subsection{Main results and related conjectures} \label{main_results}

In the language of flag algebras, we have:

\begin{conjecture}[Tur\'an's $(3,4)$-conjecture]
$\rho_3\geq 4/9$.
\end{conjecture}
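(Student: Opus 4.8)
\emph{The plan.} This is the open problem the whole paper is aimed at, and I do not expect the outline below to settle it; what follows is the route I would take and the point at which I expect it to stall. In the language of flag algebras the conjecture asserts $\phi(\rho_3)\geq 4/9$ for every $\phi\in\Hom^+(\scr A^0[T_{\text{Turan}}],{\Bbb R})$, so it suffices to exhibit, using flags on at most $N$ vertices for some $N$, a certificate
$$ \rho_3 - \frac{4}{9} \;=\; \sum_{\sigma}\sum_i \lambda_{\sigma,i}\,\eval{f_{\sigma,i}^2}{\sigma}\;+\;(\text{manifestly non-negative remainder}) $$
in $\scr A^0[T_{\text{Turan}}]$, with $\lambda_{\sigma,i}\geq 0$, $f_{\sigma,i}\in\scr F^\sigma[T_{\text{Turan}}]$ and $\sigma$ ranging over types of size $\leq N-2$. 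The first step is to run the corresponding semidefinite program at increasing $N$; its optimum is exactly what produces the record bound $0.438334$ of \cite{turan}. The obstruction is structural: the continuum of pairwise non-isomorphic extremal homomorphisms, parameterised by probability measures on ${\Bbb R}$ (the Brown \cite{Bro} and Kostochka \cite{Kos} constructions), makes it implausible that the plain Cauchy--Schwarz calculus reaches $4/9$ at any \emph{finite} level, since the dual optima fail to concentrate on finitely many homomorphisms and so no finite sum of squares can be exactly tight.

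\emph{The structural route.} The approach the paper is really organised around is to couple a weak unconditional lower bound with a stability-and-structure step. Concretely: (a) show that any $\phi$ with $\phi(\rho_3)\leq 4/9+\delta$ has the densities of $\bar P_3$, $\vec C_4$ and $G_3$ all equal to $o_\delta(1)$, i.e. the forbidden three- and four-vertex patterns are rare; (b) deduce from (a) that $\phi$ is $o_\delta(1)$-close to $\psi|_{\FDF}$ for some $\psi\in\Hom^+(\scr A^0[T_{\FDF}],{\Bbb R})$ and that the underlying-graph homomorphism $\psi|_{\text{OE}}$ has edge density $\geq 2/3-\epsilon$; then Theorem \ref{main} \ref{main:b} gives $(\psi|_{\FDF})(\rho_3)\geq 4/9$, hence $\phi(\rho_3)\geq 4/9-o_\delta(1)$, and letting $\delta\to 0$ finishes the conjecture. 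The $G_3$ part of (a) is essentially \cite{turan,Pik}; what is genuinely missing is the $\vec C_4$-freeness of the reconstructed orgraph in (b) and the density bound $\geq 2/3-\epsilon$ --- note that at present we only know the reverse-flavoured fact that orgraphs of density $\leq 2/3-1.3\cdot 10^{-2}$ cannot be near-extremal, and that the density threshold $2/3$ here is the same one that appears in the Lov\'asz--Simonovits theory \cite{LoSi}.

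\emph{The main obstacle.} I expect the bottleneck to be reconstructing the hidden orientation --- upgrading ``all local pattern densities of $\phi$ vanish'' to ``$\phi$ factors through $\pi^{\FDF}$''. Because the extremal family is a genuine continuum, no rigid reconstruction is possible; the right statement is that the Fon-der-Flaass interpretation is the \emph{only} way to make the densities of $\bar P_3$, $\vec C_4$ and $G_3$ simultaneously $o(1)$. I would try to prove this by a Cauchy--Schwarz computation in $\scr A^\sigma[T_{\text{Turan}}]$ for a size-$2$ type $\sigma$, using $\sigma$-flags that encode, for an ordered pair of vertices, the candidate orientation between them, and then verifying $\vec C_4$-freeness of that orientation on $4$-vertex flags. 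Until such an argument is in place, the honest status is the one reflected by the rest of the paper: the conjecture holds under the $\FDF$ and high-density hypotheses of Theorem \ref{main}, and the unconditional bound is improved to $7/16(1-o(1))$, while the conjecture itself remains open.
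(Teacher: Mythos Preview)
You have correctly identified the situation: this statement is a \emph{conjecture}, not a theorem, and the paper does not prove it. There is no ``paper's own proof'' to compare against---the paper states Tur\'an's $(3,4)$-conjecture as open, proves the partial results (Theorems \ref{warm-up} and \ref{main}) you mention, and in Section \ref{concl} discusses exactly the structural route you outline as a possible avenue for future work (constructing an inverse of the Fon-der-Flaass interpretation). Your assessment of the obstacles---the continuum of Brown--Kostochka extremal homomorphisms obstructing a finite sums-of-squares certificate, and the missing step of reconstructing a $\vec C_4$-free orientation from a near-extremal Tur\'an $3$-graph---is accurate and matches the paper's own discussion. One minor correction: the paper does \emph{not} establish that near-extremal $\phi$ have $o(1)$ density of $\bar P_3$ or $G_3$ (that would already be substantial progress); the result of \cite{turan,Pik} you cite concerns the separate problem $\pi_{\min}(I^3_4,G_3)=4/9$ where $G_3$ is forbidden outright, not shown to be rare.
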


\begin{conjecture}[Fon-der-Flaass conjecture]
$\pi^{\FDF}(\rho_3)\geq 4/9.$
\end{conjecture}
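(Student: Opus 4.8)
The plan is to work entirely inside $\scr A^0[T_{\FDF}]$ and to show $\phi(\pi^{\FDF}(\rho_3))\geq 4/9$ for every $\phi\in\Hom^+(\scr A^0[T_{\FDF}],{\Bbb R})$. The opening step is bookkeeping: since membership of a triple in $FDF(\Gamma)$ depends only on the orgraph it induces, $\pi^{\FDF}(\rho_3)$ is the total density of the four orgraphs on Figure \ref{m3} that span an edge, i.e. $\pi^{\FDF}(\rho_3)=I_3+\bar P_3+\vec K_{1,2}+\vec T_3$ (equivalently $1-\vec K_{2,1}-\vec P_3-\vec C_3$, since these seven orgraphs exhaust $\scr M_3[T_{\FDF}]$). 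So the target is one explicit linear inequality among the seven $3$-vertex orgraph densities.

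Second, I would split on the edge density $t\df\phi(\rho)$ of the undirected graph $G$ underlying $\Gamma$ (here $\rho\in\scr M_2[T_{\Graph}]$, pushed into $\scr A^0[T_{\FDF}]$ via $\text{OE}$). For $t\geq 2/3-\epsilon_0$, with $\epsilon_0>0$ the absolute constant of Theorem \refth{main}{b}, the bound is exactly that theorem, so the remaining task is the range $t\leq 2/3-\epsilon_0$. Every conjectured extremal configuration $FDF(\Gamma_K|_{{\Bbb Z}_3\times S})$ has $t=2/3$, so in this range the inequality should hold with a positive, $\epsilon_0$-dependent slack, and the job is to extract it effectively.

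Third, for $t\leq 2/3-\epsilon_0$ I would run the flag-algebra semidefinite method at level $N\geq 6$, but --- crucially --- after first passing to the coloured theory $T_{\FDF}^\ast$ through a \emph{canonical} $3$-colouring rather than an arbitrary one: average over a random edge $\{c_1,c_2\}$ of $G$ and colour each remaining vertex by the $\C$-rule \refeq{color_introduction} (equivalently the $\text{OC}$-rule on $\Gamma$, by \refeq{commutative}). On any $FDF(\Gamma_K|_{{\Bbb Z}_3\times S})$ this rule recovers precisely the underlying ${\Bbb Z}_3$-partition, so the coloured extremal picture is rigid. Inside $T_{\FDF}^\ast$ one then has the extra slack furnished by positive semidefinite forms attached to the coloured types $(a)$ and $N_a$ and the coloured flags $\vec P_3^{N_a},\vec P_3^{N_a,\ast}$ (as well as the small uncoloured types $A$, $B$, $1$), by the Lov\'asz--Simonovits-type inequality \refeq{main_1}, and by the identically vanishing density of $\vec C_4$; the aim is to combine all of this into a certificate equal to $\pi^{\FDF}(\rho_3)-4/9$ modulo level-$N$ relations.

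The hard part is the non-uniqueness of the extremal configurations: they form a continuum, one per probability measure on ${\Bbb R}$, so the set of optimal $\phi$ is positive-dimensional; the sum-of-squares part of any certificate must then vanish on all of it at once, which pins down the kernels of the matrices $Q_\sigma$ very tightly and makes rounding a numerical near-certificate to an exact one delicate --- this is exactly why the brute-force $5$-vertex search stalls at $t\leq 2/3-1.3\cdot 10^{-2}$ rather than reaching $2/3$. The canonical colouring above is meant to quotient out this continuum --- trading ``probability measure on ${\Bbb R}$'' for finite data about the colouring --- so that within each coloured type the extremal point becomes isolated and ordinary rounding applies; but even granting this, one must still certify the gap on the entire interval $t\in[0,2/3-\epsilon_0]$, which I expect to force $N=6$ (already tens of thousands of orgraphs, at the edge of feasibility) unless one can instead prove a stability statement that $\phi(\pi^{\FDF}(\rho_3))$ close to $4/9$ forces $G$ close to complete multipartite, absorbing the last regime into Theorem \refth{main}{a} and reducing the whole conjecture to the two cases already under control.
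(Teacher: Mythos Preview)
The statement you are attempting is labeled a \emph{conjecture} in the paper and is explicitly declared open (``the question of improving the latter bound to its conjectured optimal value $\geq 4/9(1-o(1))$\ldots is still open, and this is precisely the question that motivated our paper''). The paper supplies no proof; its contributions are the unconditional bound $7/16$ (Theorem~\ref{warm-up}) and the conditional bound $4/9$ under either $\bar P_3=0$ or $\rho\geq 2/3-\epsilon$ (Theorem~\ref{main}). So there is nothing to compare your attempt to: you are trying to prove something the paper does not.

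Your text is a research strategy, not a proof, and the genuine gap is the low-density branch. Invoking Theorem~\ref{main}~\ref{main:b} for $t\geq 2/3-\epsilon_0$ is fine, but the constant $\epsilon_0$ actually delivered by the paper is at most $10^{-6}$ (this is the hypothesis $\phi(\delta_\rho)\leq 10^{-6}$ carried through Theorem~\ref{graph3:thm} and Section~\ref{low_out_degree}), whereas by the paper's own remark the level-$5$ SDP reaches only $t\leq 2/3-1.3\cdot 10^{-2}$, and ``$1.3\cdot 10^{-2}$ is, however, \emph{way} above any lower bound on $\epsilon$ \ldots one can realistically hope to achieve with our methods''. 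You offer no evidence that raising $N$ to $6$ closes four orders of magnitude, nor that your canonical-colouring idea collapses the continuum of extremal $\phi_\mu$ to an isolated point at any fixed SDP level; and your fallback (``$\phi(\pi^{\FDF}(\rho_3))$ close to $4/9$ forces $G$ close to complete multipartite, reducing to Theorem~\ref{main}~\ref{main:a}'') is itself an open stability statement essentially equivalent in difficulty to the conjecture.

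One smaller point: ``average over a random edge $\{c_1,c_2\}$ and colour by the $\C$-rule'' does not produce a single $\phi^\ast\in\Hom^+(\scr A^0[T_{\FDF}^\ast],{\Bbb R})$ to feed into an SDP. The paper's mechanism is to pick one specific $\phi^A\in S^A(\phi)$ \emph{minimizing} $\pi^{\text{OE},A}\pi^{\C}(\kappa)$ (or $K_3^E$), and the optimality of that choice is what drives the key inequality \refeq{optimality}; an averaged colouring would not deliver that.
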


Let $\mu$ be any probability measure on Borel subsets of ${\Bbb Z}_3\times {\Bbb R}$ that is invariant under the action of $S_3$ on the first coordinate. This naturally (by replacing densities with obvious integrals, cf. \cite{LoSz}) leads to a homomorphism $\phi_\mu\in\Hom^+(\scr A^0[T_\FDF], {\Bbb R})$. $\phi_\mu|_{\FDF}(\rho_3)=4/9$ for any such measure $\mu$, and these $\phi_\mu|_{\FDF}$ constitute the set of all known $\psi\in\Hom^+(\scr A^0[T_\text{Turan}],{\Bbb R})$ with the property $\psi(\rho_3)=4/9$.

\begin{proposition}[\cite{Fon}] \label{FDF}
$\pi^{\FDF}(\rho_3)\geq 3/7$.
\end{proposition}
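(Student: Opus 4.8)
The plan is to express the edge density $\pi^{\FDF}(\rho_3)$ of the Fon-der-Flaass construction entirely in terms of local densities of induced $3$-vertex orgraphs in $\Gamma$, and then to bound the resulting linear form from below using a handful of Cauchy--Schwarz (i.e. positive-semidefiniteness) inequalities in the flag algebra $\scr A^1[T_{\FDF}]$. First I would write, for $\phi\in\Hom^+(\scr A^0[T_{\FDF}],{\Bbb R})$, the identity $\phi(\pi^{\FDF}(\rho_3)) = \sum_{M} c_M\,\phi(M)$, where $M$ ranges over the simple $3$-vertex orgraphs of Figure~\ref{m3} and $c_M\in\{0,1\}$ according to whether $M\in\{I_3,\bar P_3,\vec K_{1,2},\vec T_3\}$. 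So the quantity to lower-bound is $\phi(I_3)+\phi(\bar P_3)+\phi(\vec K_{1,2})+\phi(\vec T_3)$ subject to the constraint that $\Gamma$ is $\vec C_4$-free, i.e. $\phi$ kills the flag-algebra element corresponding to induced $\vec C_4$ (and the normalization $\sum_M \phi(M)=1$).

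Next I would bring in the type $1$ of size one and work in $\scr A^1[T_{\FDF}]$. The natural degree-$1$ objects are the flag $\alpha$ (a directed edge with the tail distinguished) and its reverse; writing $d^+ := \phi^1(\alpha)$, $d^- := \phi^1(\bar\alpha)$ for the (random) out- and in-degree densities seen from a uniformly random root, one gets the averaging identities $\phi(\text{out-degree sum})=\expect{d^+}$ etc., and, crucially, expansions of each $3$-vertex density $\phi(M)$ in terms of products of these degree densities averaged over the root. The $\vec C_4$-freeness condition, when rooted and unrooted appropriately, forbids certain configurations of two in-neighbours and two out-neighbours; this is exactly where the constraint does its work, limiting how large the ``cherries'' $\vec K_{1,2}$ and $\vec K_{2,1}$ can be relative to the squared degrees. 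The main inequality is then of the shape: apply $\phi^1$ to $(\alpha - c)^2 \geq 0$ and to $(\bar\alpha - c)^2\geq 0$ for a well-chosen constant $c$, average over the root, and combine with the $\vec C_4$-free constraint to get a lower bound on $\phi(I_3)+\phi(\bar P_3)+\phi(\vec K_{1,2})+\phi(\vec T_3)$; optimizing the free constant $c$ should produce the value $3/7$ (the extremal $c$ will be something like $2/7$ or $1/2$, reflecting that in Tur\'an's example every vertex has out-degree $\approx 1/2$).

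Concretely, the key steps in order are: (1) record the exact $\{0,1\}$-expansion of $\FDF(E_3)$ over the orbits in Figure~\ref{m3}; (2) set up $\scr A^1[T_{\FDF}]$, identify $\alpha,\bar\alpha$ and the four rooted $3$-vertex flags refining $\vec K_{1,2},\vec K_{2,1},\vec P_3,\vec C_3$, and write the chain rule / averaging identities $\pi^{1\to 0}$ connecting them to the unrooted densities; (3) translate ``$\Gamma$ is $\vec C_4$-free'' into a vanishing (or $o(1)$) statement for the relevant $4$-vertex flag and push it down to a rooted quadratic constraint; (4) choose the Cauchy--Schwarz certificate $(\alpha - c\cdot e)^2$ (and its in-degree twin), expand, apply $\phi^1$, average, and add the constraint with an appropriate multiplier; (5) optimize over $c$ and the multiplier to land on $3/7$. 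I would also note, for later use in the warm-up section, that the same machinery with a slightly sharper choice of certificate yields the $7/16$ improvement.

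The step I expect to be the real obstacle is (3)–(4): getting the $\vec C_4$-free condition into a form that actually interacts usefully with the square $(\alpha-c\,e)^2$. A random root sees an out-set and an in-set; $\vec C_4$-freeness says (roughly) that between the out-set and the in-set of a vertex the bipartite orgraph cannot contain an induced alternating $4$-cycle, which is a statement about \emph{pairs} of roots, not a single rooted quadratic — so some care is needed to express it as (or bound it below by) something of the form $\phi^\sigma(\text{sum of squares})\ge 0$ for a suitable type $\sigma$ of size $2$ (the type $A$ or $N$, say), and then to average it against the degree-square certificate without losing too much. Matching the loss in this averaging against the target $3/7$ — i.e. verifying that the chosen $c$ and multiplier make all slack terms manifestly nonnegative — is the computational heart of the argument, but it is exactly the kind of bookkeeping the flag-algebra formalism is designed to make routine once the right type and the right squares have been identified.
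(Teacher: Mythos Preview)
Your plan contains a genuine misdirection: you treat the $\vec C_4$-freeness of $\Gamma$ as the crucial constraint and spend most of your effort worrying about how to encode it as a rooted quadratic relation (your steps (3)--(4)). In fact, $\vec C_4$-freeness plays \emph{no role whatsoever} in the $3/7$ bound. The paper says this explicitly at the beginning of Section~\ref{main:sec}: ``Note that neither Proposition~\ref{FDF} nor our improvement Theorem~\ref{warm-up} use $\vec C_4$-freeness.'' So the obstacle you single out is a phantom, and the step you are missing is elsewhere.

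What the paper actually does is short and entirely orientation-agnostic on the ``hard'' side. One writes $\pi^{\FDF}(\rho_3)=4/9-\delta$ and checks by direct expansion over the seven orgraphs of Figure~\ref{m3} the identity
\[
\delta=\tfrac12(\delta_{K_3}-\delta_\rho-\bar P_3)-3\eval{\delta_\alpha^2}{1},
\]
where $\delta_\rho=2/3-\rho$, $\delta_{K_3}=2/9-K_3$, $\delta_\alpha=1/3-\alpha$. The only nontrivial input is then \emph{Goodman's bound} $K_3\geq \rho(2\rho-1)$, i.e.\ $\delta_{K_3}\leq \tfrac53\delta_\rho-2\delta_\rho^2$, which is a statement about the underlying undirected graph and has nothing to do with orientation or with $\vec C_4$. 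Combining it with the trivial Cauchy--Schwarz $\eval{\delta_\alpha^2}{1}\geq \eval{\delta_\alpha}{1}^2=\tfrac14\delta_\rho^2$ gives
\[
\delta\leq \tfrac13\delta_\rho-\tfrac74\delta_\rho^2,
\]
whose maximum over $\delta_\rho$ is $1/63$, i.e.\ $\pi^{\FDF}(\rho_3)\geq 4/9-1/63=3/7$. Your certificate $(\alpha-c)^2\geq 0$ is exactly this Cauchy--Schwarz term (with $c=1/3$, not $2/7$ or $1/2$; in the extremal examples the relative out-degree is $1/3$), but by itself it is far from enough: without the Goodman input relating $K_3$ to $\rho$, no single-rooted degree square closes the gap. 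So drop the $\vec C_4$ machinery here, insert Goodman's bound (or, if you prefer, its flag-algebra proof via $\eval{(e-1/2)^2}{1}\geq 0$ together with the $3$-vertex chain rule), and the argument goes through in a few lines.
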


Our new results are as follows:

\begin{theorem} \label{warm-up}
$\pi^{\FDF}(\rho_3)\geq 7/16$.
\end{theorem}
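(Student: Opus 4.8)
The plan is to work entirely inside $\scr A^0[T_{\FDF}]$ (suppressing $\pi^{\FDF}$) and bound $\rho_3\df \pi^{\FDF}(\rho_3)$ from below by a Cauchy--Schwarz/flag-algebra calculation. First I would write out $\rho_3$ as a nonnegative combination of densities of the orgraphs on $3$ vertices that $\FDF$ turns into edges, namely $I_3,\bar P_3,\vec K_{1,2},\vec T_3$ from Figure \ref{m3} (note $\vec K_{2,1}$ equals $\vec K_{1,2}$ after relabelling, and $\vec P_3,\vec C_3$ are the non-edges). So $\rho_3 = d(I_3)+d(\bar P_3)+d(\vec K_{1,2})+d(\vec T_3)$, where the $d(\cdot)$ are the flag-algebra densities of these $3$-vertex orgraphs; since $\Gamma$ is $\vec C_4$-free, $d(\vec C_4)=0$ is the only constraint we may freely use, beyond the trivial normalisation that all $3$-vertex densities sum to $1$. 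The improvement over Fon-der-Flaass's $3/7$ will come from adding one or two positive-semidefiniteness certificates coming from types of size $1$ (the unique type $1$ of $T_{\FDF}$, and possibly the types $A,B$ of size $2$, i.e. an oriented edge).

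The core step is to choose an element $f\in\scr A^1[T_{\FDF}]$ (a linear combination of the $1$-flags on $2$ and on $3$ vertices — the basic one being $\alpha$, a directed edge with tail labelled, together with flags recording the in/out-degree of the root among two further vertices) and use $\llbracket f^2\rrbracket_1\geq 0$ to get an inequality of the form $\rho_3\geq 7/16 - \lambda\,\llbracket f^2\rrbracket_1$ for some $\lambda\ge 0$, i.e. $\rho_3\geq 7/16$. Concretely I would expand $\llbracket f^2\rrbracket_1$ into the monomial basis of $\scr A^0[T_{\FDF}]$ on $3$ vertices (averaging over the root), collect this with the expansion of $\rho_3=7/16\cdot 1 + (\rho_3 - 7/16)$, set up the resulting linear system over the $9$ simple $3$-vertex orgraphs (with $d(\vec C_4)=0$ not yet relevant at the $3$-vertex level, so really just the $9$ orgraph types), and solve for coefficients of $f$ making every monomial coefficient in $\rho_3-7/16-\lambda\llbracket f^2\rrbracket_1$ nonnegative. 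Because $7/16$ sits strictly between Fon-der-Flaass's $3/7$ and the optimum $4/9$, I expect a single size-$1$ certificate (a $2\times2$ or $3\times3$ PSD block in the $\alpha$-and-its-refinements basis) to suffice; the \emph{stable} form claimed in the introduction — the bound survives assuming only $d(\vec C_4)=o(1)$ — will be automatic since the only place $\vec C_4$-freeness enters is to discard $d(\vec C_4)$, which we can instead carry as an $o(1)$ error term.

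The main obstacle is purely the bookkeeping: correctly computing the coefficients with which each $3$-vertex orgraph appears in $\llbracket \alpha^2\rrbracket_1$ (and in the cross terms $\llbracket \alpha\cdot g\rrbracket_1$ for the degree-recording flags $g$), because the unlabelling operator $\llbracket\cdot\rrbracket_1$ involves the probabilities that a random rooted extension realises each flag, and orientations make these multiplicities slightly delicate (e.g. $\vec T_3$, $\vec P_3$, $\vec C_3$ each get different weights). I would organise this as a single table: rows indexed by the $9$ orgraphs on $3$ vertices, columns recording (a) the coefficient in $\rho_3$, namely $1$ for $I_3,\bar P_3,\vec K_{1,2},\vec K_{2,1},\vec T_3$ and $0$ otherwise, and (b) the coefficient in the chosen $\llbracket f^2\rrbracket_1$; then verify rowwise that (a)$-7/16-\lambda\cdot$(b) $\geq 0$. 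Once the table checks out, the proof is two lines: $\rho_3 = \sum (\text{row a}) \geq \sum\big(7/16 + \lambda(\text{row b})\big) = 7/16 + \lambda\llbracket f^2\rrbracket_1 \geq 7/16$. I expect essentially no conceptual difficulty beyond getting that table right, which is why the paper bills this as the "relatively easy" warm-up that simultaneously rehearses the machinery — in particular the interpretations $\C,\OC$ and the size-$1$ Cauchy--Schwarz bookkeeping — needed for the harder Theorem \ref{main}.
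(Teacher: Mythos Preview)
Your plan has a genuine gap: you are hoping that a routine positive-semidefinite certificate of type $1$ (perhaps augmented by types $A,B$) will yield $7/16$ by a row-by-row check over $3$-vertex orgraphs, but the paper's own calculation shows that the type-$1$ Cauchy--Schwarz bound combined with the best \emph{polynomial} (Goodman) triangle inequality only recovers Fon-der-Flaass's $3/7$. The actual proof works differently. One introduces the tangential variables $\delta=4/9-\pi^{\FDF}(\rho_3)$, $\delta_\rho=2/3-\rho$, $\delta_{K_3}=2/9-K_3$, $\delta_\alpha=1/3-\alpha$, checks the identity
\[
\delta \;=\; \tfrac12\bigl(\delta_{K_3}-\delta_\rho-\bar P_3\bigr)\;-\;3\,\llbracket\delta_\alpha^2\rrbracket_1,
\]
and then \emph{imports} the optimal lower bound on triangle density in the range $\rho\in[1/2,2/3]$ (Fisher, Razborov, Nikiforov), which in these coordinates reads $\delta_{K_3}\le \delta_\rho+\tfrac{\sqrt6}{3}\delta_\rho^{3/2}$. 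Together with the trivial Cauchy--Schwarz estimate $\llbracket\delta_\alpha^2\rrbracket_1\ge\tfrac14\delta_\rho^2$ this gives $\delta\le \tfrac{\sqrt6}{6}\delta_\rho^{3/2}-\tfrac34\delta_\rho^2\le \tfrac1{144}$, i.e.\ $\pi^{\FDF}(\rho_3)\ge 7/16$. The crucial triangle-density input is a \emph{non-polynomial} inequality (note the exponent $3/2$); it is not a sum of squares of flags, so your ``single size-$1$ certificate'' cannot substitute for it. In short, the missing idea is that the warm-up leans on an external extremal result about ordinary graphs, not on a self-contained flag-algebra SDP.

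A few side corrections. There are $7$ orgraphs on $3$ vertices, not $9$; $\vec K_{2,1}$ is \emph{not} isomorphic to $\vec K_{1,2}$ and does \emph{not} produce an $\FDF$-edge (it has neither an isolated vertex nor a vertex of out-degree $2$). The proof of Theorem~\ref{warm-up} does not use $\vec C_4$-freeness at all --- indeed the paper exhibits a non-$\vec C_4$-free orgraph (a random orientation of the balanced complete bipartite graph) attaining $\pi^{\FDF}(\rho_3)=7/16$ exactly, so the bound is tight even without the constraint. Finally, the interpretations $\C$ and $\text{OC}$ play no role here; they are introduced solely for the proof of Theorem~\ref{main}.
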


\begin{theorem} \label{main}
\begin{tenumerate}
\item $\bar P_3=0\Longrightarrow \pi^{\FDF}(\rho_3)\geq 4/9.$

\item $\rho\geq 2/3-\epsilon \Longrightarrow \pi^{\FDF}(\rho_3)\geq 4/9$, where $\epsilon>0$ is an absolute constant. \label{main:b}
\end{tenumerate}
\end{theorem}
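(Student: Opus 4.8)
The plan is first to translate the target into densities of orgraphs on three vertices. Up to isomorphism there are exactly seven simple orgraphs on three vertices --- $I_3,\bar P_3,\vec P_3,\vec K_{1,2},\vec K_{2,1},\vec C_3,\vec T_3$ of Figure~\ref{m3} --- and $FDF$ places an edge on precisely the four that contain either an isolated vertex ($I_3,\bar P_3$) or a vertex of out-degree $2$ ($\vec K_{1,2},\vec T_3$). Since these seven flags form a basis of the degree-$3$ part, in $\scr A^0[T_{\FDF}]$ one has
$$\pi^{\FDF}(\rho_3)\;=\;I_3+\bar P_3+\vec K_{1,2}+\vec T_3\;=\;1-\vec P_3-\vec K_{2,1}-\vec C_3,$$
so both parts are equivalent to $\vec P_3+\vec K_{2,1}+\vec C_3\le 5/9$. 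I will also keep the elementary relations $\rho=\tfrac13\bar P_3+\tfrac23(\vec P_3+\vec K_{1,2}+\vec K_{2,1})+(\vec C_3+\vec T_3)$ and $I_3+\bar P_3+(\vec P_3+\vec K_{1,2}+\vec K_{2,1})+(\vec C_3+\vec T_3)=1$. In part (a), $\bar P_3=0$ says that the underlying graph $G$ has no induced $\bar P_3$, i.e.\ $G$ is complete multipartite; it also kills the $\bar P_3$ term, leaving $\pi^{\FDF}(\rho_3)=I_3+\vec K_{1,2}+\vec T_3$.

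The main engine is the edge-rooted recolouring interpretation $\text{OC}:T_{\FDF}^\ast\leadsto T_{\FDF}^A$ of Section~\ref{names}, applied on top of the plain Cauchy--Schwarz set-up over $T_{\FDF}$ that is already developed for Theorem~\ref{warm-up}. Having rooted a directed edge $c_1\to c_2$, one recolours the remaining vertices according to their $G$-adjacency to $\{c_1,c_2\}$ as in Figure~\ref{c_int}; on the Tur\'an--Brown--Kostochka homomorphisms $\phi_\mu$ this recovers the intended $\mathbb{Z}_3$-colouring (the parts of $c_1$ and $c_2$, and the rest), and when $\bar P_3=0$ it is just the honest partition of $G$ into ``part of $c_1$'', ``part of $c_2$'' and ``the rest''. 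Composing $\pi^{\text{OC}}:\scr A^0[T_{\FDF}^\ast]\to\scr A^A[T_{\FDF}]$ with the averaging operator $\llbracket\,\cdot\,\rrbracket_A:\scr A^A[T_{\FDF}]\to\scr A^0[T_{\FDF}]$ (which sends $1$ to $\rho/2$) transports any Cauchy--Schwarz or sum-of-squares certificate built in the coloured theory $T_{\FDF}^\ast$ into a genuine inequality in $\scr A^0[T_{\FDF}]$. Passing to $T_{\FDF}^\ast$ is what pays off: once the rooted edge is fixed, $\vec C_4$-freeness of $\Gamma$ becomes a short list of forbidden coloured three- and four-vertex patterns, and the orientation between two colour classes is tightly constrained (its neighbourhoods must form a chain) --- exactly the structural information that the $3/7$ and $7/16$ arguments leave unused.

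To prove part (a), expand $\vec P_3+\vec K_{2,1}+\vec C_3$ (equivalently $5/9-I_3-\vec K_{1,2}-\vec T_3$) and the constant $5/9$ in the basis of coloured flags pulled back through $\text{OC}$, together with the forbidden-$\vec C_4$ relations and the relation $\bar P_3=0$. Then add Cauchy--Schwarz certificates over the small coloured types of Section~\ref{names} --- the one-vertex types $(a)$ and the non-edge two-vertex types $N_a$ --- each carrying an explicit positive semidefinite coefficient matrix on the relevant flags (for instance $\alpha_{a,3-a}$ over $(a)$, and $\vec P_3^{N_a},\vec P_3^{N_a,\ast}$ over $N_a$), plus nonnegative multiples of the trivial facts (coloured flag densities $\ge 0$ and $\vec C_4=0$) and an arbitrary real multiple of $\bar P_3=0$. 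One chooses the matrices and scalars so that every one of these terms vanishes on $\phi_\mu$; this forces the whole combination to collapse to $\pi^{\FDF}(\rho_3)-4/9\ge 0$. The constants $4/9$, $2/9$, $1/9$ surfacing in the bookkeeping are exactly the triple densities of the balanced complete tripartite configuration, which is what makes the collapse numerically possible.

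Finally, part (b) starts from a Lov\'asz--Simonovits-type input (the inequality \refeq{main_1}): if $\rho\ge 2/3-\epsilon$ then $G$ is $O(\epsilon)$-close, in the relevant local densities, to the balanced complete tripartite graph --- in particular $\bar P_3=O(\epsilon)$, and the link of a typical vertex is within $O(\epsilon)$ of a complete bipartite graph. One then re-runs the part-(a) certificate, replacing the free multiple of ``$\bar P_3=0$'' by a nonnegative multiple of ``$\rho-(2/3-\epsilon)\ge0$'' and absorbing the resulting $O(\epsilon)$ discrepancies into the positive slack left in the Cauchy--Schwarz terms. The hard part is exactly this two-fold task. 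First, one must discover the right positive semidefinite matrices in part (a) --- the ``very substantial amount of (double) counting'' mentioned in the introduction --- so that the $\vec C_4$-free constraint combines with the multipartite/colouring structure delicately enough to push the bound all the way from $7/16$ up to the exact value $4/9$; producing such a certificate is the crux of the whole argument. Second, one must check that this certificate carries enough uniform positive slack to swallow the $O(\epsilon)$ errors, which is precisely why only some small absolute $\epsilon>0$ is claimed --- far below what a finite semidefinite computation on a handful of vertices could certify.
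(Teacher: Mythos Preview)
Your proposal has a genuine gap: you describe the flag-algebra machinery (the interpretation $\text{OC}$, coloured types, Cauchy--Schwarz certificates) but never actually produce a certificate, deferring what you yourself call ``the crux of the whole argument'' to an unspecified choice of positive semidefinite matrices. This is not a proof sketch; it is a statement of hope that a pure sum-of-squares proof exists. The paper does \emph{not} proceed this way, and its discussion in the introduction explicitly notes that brute-force SDP on five vertices falls short, so there is no a priori reason to believe your route succeeds.

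The paper's proof splits $\delta=4/9-\pi^{\FDF}(\rho_3)$ as $\delta=f-3\eval{\delta_\alpha^2}{1}$ (equation \refeq{main_expression}) and handles the two pieces by entirely different methods. The bound $f\le\tfrac34\delta_0^3$ (Theorem~\ref{graph3:thm}) is indeed obtained by a large SDP in $T_{\Graph}^\ast$, and your recolouring idea is close in spirit here. But the decisive lower bound $\eval{\delta_\alpha^2}{1}\ge\tfrac13\delta_0^3$ (Section~\ref{low_out_degree}) is \emph{not} an SOS certificate at all: one observes that a $\vec C_4$-free orientation of the complete bipartite graph on colour classes $1,2$ is acyclic, hence has a vertex of out-degree $0$; removing it and iterating shows that the fraction of vertices with relative out-degree $\le\tfrac13-\delta_0+x$ is at least $x$, and then one \emph{integrates} $\int_0^{\delta_0}(\delta_0-x)^2\,dx$. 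This argument selects extremal vertices (minimizers of $\alpha_{a,3-a}$), applies an inductive peeling, and integrates over a real parameter --- none of which is captured by a finite Cauchy--Schwarz combination over small types. For part~(b) the same idea is made robust against a small $\widetilde\kappa$ via a Markov-inequality separation of ``bad'' vertices. Your outline contains no analogue of this step, and without it the gap between $7/16$ and $4/9$ cannot be closed.
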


\section{Warm-up: proof of Theorem \ref{warm-up}}

In this section we work in the theory $T_{\FDF}$; recall from Section \ref{names} that we also use in this theory all notation introduced in $T_{\Graph}$ via the orientation-erasing interpretation $\text{OE}$.

It turns out that all computations become much more instructive and transparent if we replace critical quantities by the {\em deviations} from their values in (conjectured) extremal examples. In particular, we let
\begin{equation} \label{coordinate_change}
\longeq{\delta &\df& 4/9-\pi^{\FDF}(\rho_3);\\
\delta_\rho &\df& 2/3-\rho;\\
\delta_{K_3} &\df& 2/9-K_3;\\
\delta_\alpha &\df& 1/3-\alpha.}
\end{equation}
Our ultimate goal is to prove $\delta\leq 0$, and our starting point is to re-write $\delta$ in the form
\begin{equation} \label{main_expression}
\delta =\frac 12(\delta_{K_3}-\delta_\rho-\bar P_3)-3\eval{\delta_\alpha^2}{1}
\end{equation}
(for the definition and properties of the averaging operator $\eval{\cdot}{\sigma}$ see \cite[Section 2.2]{flag}).
This is easily checked by expanding all these elements as linear combinations of the seven orgraphs on Figure \ref{m3}.

The first part
\begin{equation} \label{f_def}
f\df \frac 12 (\delta_{K_3}-\delta_\rho-\bar P_3)
\end{equation}
in \refeq{main_expression} actually belongs to (the image of) $\scr A^0[T_{\Graph}]$ and, moreover, if we ignore for the time being the term $\bar P_3$, it corresponds to the classical problem of bounding from below the number of triangles in a graph with a given number of edges. Let us first apply here Goodman's simple bound $K_3\geq \rho(2\rho-1)$ \cite{Goo}. In the tangential coordinates \refeq{coordinate_change} it can be re-written as
\begin{equation} \label{goodman}
\delta_{K_3}\leq \frac 53\delta_\rho-2\delta_\rho^2
\end{equation}
and, thus, since $\eval{\delta_\alpha}{1}=\frac 12\delta_\rho$, from \refeq{main_expression} we get
$$
\delta\leq \frac 13\delta_\rho-\delta_\rho^2-\frac 12\bar P_3-3\eval{\delta_\alpha^2}{1}\leq \frac 13\delta_\rho-\frac 74\delta_\rho^2.
$$
Note that Proposition \ref{FDF} (that is, $\delta\leq \frac 1{63}$) already follows from this. Also, $\delta_\rho\leq 0$ now implies the desired inequality $\delta\leq 0$, so in what follows we can and will assume $\delta_\rho\geq 0$ (i.e., $\rho\leq 2/3$).

The optimal upper bound on $K_3$ in terms of $\rho$ in the range $\rho\in [1/2,2/3]$ was proved in \cite{Fish} and re-proved with different methods in \cite{triangles,Nik}. While it is not very intuitive in terms of $K_3$: $K_3\geq\frac{(1-\sqrt{4-6\rho})(2+\sqrt{4-6\rho})^2}{18}$, it makes much more sense in the tangential coordinates \refeq{coordinate_change}:
$$
\delta_\rho\geq 0\Longrightarrow \delta_{K_3}\leq \delta_\rho+\frac{\sqrt 6}{3}\delta_\rho^{3/2}
$$
(note that the bound is still formally true in the trivial region $\rho\in [0,1/2]$, although it is no longer optimal there). Combining this with \refeq{main_expression}, we get:
\begin{equation} \label{main_1}
\delta\leq \frac{\sqrt 6}{6}\delta_\rho^{3/2}-\bar P_3-3\eval{\delta_\alpha^2}{1}.
\end{equation}
Theorem \ref{warm-up} is now immediate:
$$
\delta\leq \frac{\sqrt 6}{6}\delta_\rho^{3/2}-\bar P_3-3\eval{\delta_\alpha^2}{1}\leq \frac{\sqrt 6}{6}\delta_\rho^{3/2}-\frac 34\delta_\rho^2\leq \frac 1{144}
$$
(the maximum is attained at $\delta_\rho=1/6$), that is, $\pi^{\FDF}(\rho_3)\geq 7/16$.

\section{Proof of Main Result} \label{main:sec}

We begin with \refeq{main_expression} as our starting point, and we first give two examples illustrating that the task of improving Theorem \ref{warm-up} to $\delta\leq 0$ is more elaborate than it may appear.

Note that neither Proposition \ref{FDF} nor our improvement Theorem \ref{warm-up} use $\vec C_4$-freeness. Our first example shows that in order to get any further, it has to be used somehow.

\begin{example}
Consider a random orientation of a complete balanced bipartite graph. This leads to $\phi\in\Hom^+(\scr A^0[T_{\text{Orgraph}}], {\Bbb R})$ with $\phi(\pi^{\FDF}(\rho_3))=7/16$ ($T_{\text{Orgraph}}$ is the theory of arbitrary orgraphs, not necessarily $\vec C_4$-free). This can be checked either by a direct calculation or by observing that for this particular $\phi$ all inequalities that were used in the proof of Theorem \ref{warm-up} are tight.
\end{example}

Thus, we have to use the $\vec C_4$-freeness, and, looking at the proof of Theorem \ref{warm-up}, our only chance is to use this assumption to show that the orientation should be sufficiently far from out-regular, i.e. that $\eval{\delta_\alpha^2}{1}$ must be substantially larger than the trivial Cauchy-Schwarz bound $\eval{\delta_\alpha^2}{1}\geq \frac 14\delta_\rho^2$ used in the proof of Theorem \ref{warm-up}. Our second example demonstrates that even without induced copies of $\vec C_4$, it still can be of order only $\delta_\rho^{3/2}$ (cf. \refeq{main_1}); in particular we can not hope to get away with Goodman's bound \refeq{goodman}.

\begin{example} \label{main:example}
{\bf There exists $\phi\in\Hom^+(\scr A^0[T_{\rm FDF}], {\Bbb R})$ with $\phi(\bar P_3)=0$ and $\phi(f), \phi(\eval{\delta_\alpha^2}{1})=\Theta(\phi(\delta_\rho^{3/2}))$.}

\noindent
We define $\phi$ from a probability measure on ${\Bbb Z}_3\times {\Bbb R}$ (cf. Section \ref{main_results}), only this time $\mu$ is not ${\Bbb Z}_3$-invariant, albeit very close to this. More precisely, let $\mu$ be the Lebesque measure on the set
$$
S\df \{0\}\times [-1/6+\delta_0/2, 1/6-\delta_0/2]\ \ \cup\ \ \{1\}\times [-1/6-\delta_0/2,1/6]\ \ \cup\ \ \{2\}\times [-1/6,1/6+\delta_0/2] ,
$$
and let $\phi\in\Hom^+(\scr A^0[T_{\FDF}],{\Bbb R})$ be the corresponding homomorphism. Then it is easy to calculate that
$$
\phi(\delta_\rho)=\frac 32\delta_0^2
$$
and (see \refeq{f_def})
\begin{equation} \label{fvsdelta0}
\phi(f) =\frac 34\delta_0^3.
\end{equation}

On the other hand, $\phi(\eval{\delta_\alpha^2}{1})$ is the expectation of $(\alpha[(\rn a,\rn x)]-4/9)^2$, where $(\rn a,\rn x)$ is sampled at random from $S$, and $\alpha[(a,x)]$ is relative out-degree of the vertex $(a,x)\in S$. Our set $S$, however, was specifically designed in such a way that $\alpha[(a,x)]$ can be different from 4/9 only for $x\not \in [-1/6+\delta_0/2,1/6-\delta_0/2]$ (that is, on a set of measure $O(\delta_0)$), and the difference itself is also $O(\delta_0)$. Therefore,
$\phi(\eval{\delta_\alpha^2}{1})=\Theta(\delta_0^3)= \Theta(\phi(\delta_\rho))^{3/2}$.
\end{example}

The latter example is pivotal for our proof in the sense that our overall strategy is to show that it is as bad as it gets. Accordingly, the rest of the proof is split into two rather independent part. Firstly, we, arguing in the theory $T_{\Graph}$, prove that $f\geq 0$ implies that the undirected graph resulting from our orgraph by ignoring orientation can be rather well approximated by a complete tripartite graph in which part 0 has density $p_0<1/3$ with $\frac 34\of{\frac 13-p_0}^3\delta_0^3\geq f$ (cf. \refeq{fvsdelta0}). This is done in Section \ref{good_coloring}, and this is where we use the full computational power of Flag Algebras. Then we concentrate on parts 1 and 2 in our tripartite graph and show that, like in Example \ref{main:example}, the induced orgraph must contain $\Omega(\delta_0)$ vertices of relative out-degree significantly smaller than $\delta_0$ that will give us the desired contribution to $\eval{\delta_\alpha^2}{1}$ (Section \ref{low_out_degree}).

\subsection{Finding a good coloring} \label{good_coloring}

In this section we will be mostly working with the theories $T_{\Graph}$, $T_{\Graph}^\ast$. Keeping in line with \refeq{coordinate_change}, we also let
$$
\delta_0=\frac 13-p_0.
$$

Further, define
$$
\kappa\df \sum_{a\in {\Bbb Z}_3} \rho_a +\sum_{a\neq b\in {\Bbb Z_3}}\nu_{a,b};
$$
this element measures the distance between our original (undirected) graph and the complete tripartite graph defined by the coloring $\chi$. We also let
$$
\widetilde\kappa \df \rho_1+\rho_2+\nu_{1,2}
$$
and
\begin{equation} \label{kappaprime}
\kappa'\df \sum_{a\neq b\in {\Bbb Z}_3}\nu_{a,b}.
\end{equation}

Our goal in this section is to prove the following.
\begin{theorem} \label{graph3:thm}
Let $\phi\in\Hom^+(\scr A^0[T_{\rm FDF}],{\Bbb R})$ be such that
\begin{equation} \label{f_assumption}
\phi(f)\geq 0
\end{equation}
and either $\phi(\bar P_3)=0$ or $\phi(\delta_\rho)\leq 10^{-6}$. Then there exists $\phi^\ast\in\Hom^+(\scr A^0[T_{\FDF}^\ast],{\Bbb R})$ with $\phi^\ast|_{\CE}=\phi$ such that $\psi^\ast\df \phi^\ast|_{\text{OE}^\ast}\in \Hom^+(\scr A^0[T_{\Graph}^\ast], {\Bbb R})$ satisfies the following.
\begin{tenumerate}
\item $\delta_0\geq 0$; \label{graph3:a}

\item in the case $\phi(\bar P_3)=0$, $\kappa=\rho_0$ (and, therefore, $\widetilde{\kappa}=\kappa'=0$); \label{graph3:b}

\item $\delta_0^2\leq 6\delta_\rho$; \label{graph3:c}

\item $f+10^{-5}\kappa\leq \frac 34\delta_0^3$. \label{graph3:d}
\end{tenumerate}
\end{theorem}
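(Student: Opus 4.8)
The plan is to construct $\phi^\ast$ by choosing a good $\mathbb{Z}_3$-coloring $\chi$ of (a limit of) the undirected graph underlying $\Gamma$, and then verify the four numerical claims. The natural candidate for the coloring is supplied by the interpretation $\C$: fix a "typical" edge $\{c_1,c_2\}$ of the graph and color every other vertex $v$ by $\chi_0,\chi_1,\chi_2$ according to \refeq{color_introduction}, i.e. by whether $v$ is joined to both, only to $c_2$, or only to $c_1$. Formally, one first lifts $\phi$ to $\phi^E\in\Hom^+(\scr A^E[T_{\Graph}],{\Bbb R})$ lying in the support $S^E(\phi)$ of ${\bf P}^E$ (this requires $\phi(\rho)>0$, which is guaranteed once $\phi(f)\geq 0$ and $\delta_\rho$ is bounded — if $\rho$ were tiny, $f$ would be negative), then pushes it through $\pi^{\C}$ to land in $\scr A^E[T_{\Graph}^\ast]$, and finally averages back down to $\scr A^0[T_{\Graph}^\ast]$ to obtain $\psi^\ast$; the orgraph structure comes along for free through the commutative diagram \refeq{commutative}, giving $\phi^\ast$. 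One then has to check that this $\phi^\ast$ restricts correctly to $\phi$ under $\CE$, which is immediate since $\C$ acts identically on $T_{\Graph}$.

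The heart of the matter is then to prove \ref{graph3:d}: $f+10^{-5}\kappa\leq \tfrac34\delta_0^3$. This is the step where the full computational machinery of flag algebras is needed. The idea is that for the coloring produced by $\C$, the "defect" $\kappa$ measuring how far the graph is from complete tripartite is controlled by densities of small configurations that also appear (with favorable signs) when one expands $f=\tfrac12(\delta_{K_3}-\delta_\rho-\bar P_3)$ and compares against $\tfrac34\delta_0^3$. Concretely, one writes $p_0$ (hence $\delta_0$) in terms of $\psi^\ast$-densities, expands $\tfrac34\delta_0^3-f-10^{-5}\kappa$ as an explicit element of $\scr A^0[T_{\Graph}^\ast]$ (or $\scr A^E$), and exhibits it as a nonnegative combination: a sum of squares $\eval{g_i^2}{\sigma}$ for suitable types $\sigma\in\{E,N\}$ plus manifestly nonnegative density terms, using the $3$-vertex flags catalogued in Figures \ref{graph3}--\ref{n3}. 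The slack $10^{-5}$ is there precisely to absorb the error terms: in the regime $\phi(\bar P_3)=0$ everything is exact and $\kappa=\rho_0$ (part \ref{graph3:b}, which follows because $\bar P_3=0$ forces the neighborhoods to be "nested", so the only discrepancy from tripartiteness is inside color class $0$), while in the regime $\phi(\delta_\rho)\leq 10^{-6}$ one pays a $O(\delta_\rho)$ penalty that the constant swallows.

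Parts \ref{graph3:a} and \ref{graph3:c} are comparatively soft. For \ref{graph3:a}, $\delta_0=\tfrac13-p_0\geq 0$ says $p_0\leq 1/3$; if this failed, color $0$ would be the densest class, but then the complete-tripartite approximation together with $f\geq 0$ would force $\delta_{K_3}$ too large, contradicting the Fisher/Nikiforov bound $\delta_\rho\geq 0\Rightarrow \delta_{K_3}\leq\delta_\rho+\tfrac{\sqrt6}{3}\delta_\rho^{3/2}$ already invoked in \refeq{main_1} — so \ref{graph3:a} is extracted from the same inequality chain as \ref{graph3:d}, just reading off the sign. For \ref{graph3:c}, $\delta_0^2\leq 6\delta_\rho$ is the limit-version of the elementary observation in Example \ref{main:example} (there $\phi(\delta_\rho)=\tfrac32\delta_0^2$, i.e. $\delta_0^2=\tfrac23\delta_\rho<6\delta_\rho$): if the three color classes have densities $p_0\leq p_1\leq p_2$ summing to $1$ and the graph is $\kappa$-close to complete tripartite, then $\rho\leq 2(p_0p_1+p_0p_2+p_1p_2)+O(\kappa)$, and $\tfrac23-2\sum p_ip_j=\sum(p_i-\tfrac13)^2\geq (\tfrac13-p_0)^2=\delta_0^2$, so $\delta_\rho\geq \delta_0^2-O(\kappa)$; combined with \ref{graph3:d} (which bounds $\kappa$ by $O(\delta_0^3)$) this gives \ref{graph3:c} with room to spare.

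The main obstacle I anticipate is the explicit sum-of-squares certificate underlying \ref{graph3:d}: one must find the right auxiliary flag elements $g_i$ over the types $E$ and $N$ so that the Cauchy–Schwarz terms, when expanded on $3$ vertices and combined with the linear density terms, exactly dominate $f+10^{-5}\kappa$ — there is no conceptual shortcut here, and getting the coefficients to work out (especially reconciling the two hypotheses $\bar P_3=0$ versus $\delta_\rho\leq 10^{-6}$ under a single certificate) is the delicate, computational core of the section.
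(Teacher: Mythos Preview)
Your overall architecture is right—build the coloring via the interpretation $\C$ from a designated edge, then verify the four inequalities—but there are two genuine gaps in the plan.

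First, you pick a ``typical'' edge, whereas the argument crucially requires the edge to be chosen so as to \emph{minimize} $\pi^{\C}(\kappa)$ (respectively $K_3^E$ in the $\bar P_3=0$ case) over $S^A(\phi)$. This minimality is what yields the optimality inequality
\[
\pi^{\CE}\bigl(\eval{\pi^{\C}(\kappa)}{E}\bigr)\ \geq\ \kappa\cdot\pi^{\CE}(\rho),
\]
saying that the expected value of $\kappa$ over a random edge dominates the value at our chosen edge times the edge density. This inequality is the only relation tying the specific coloring we produced back to uncolored flag quantities, and it is an explicit ingredient in the certificates for \emph{both} \ref{graph3:c} and \ref{graph3:d}. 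Without it, there is no reason a generic edge should yield a coloring for which either inequality holds. (Property~\ref{graph3:a} is also handled differently from what you sketch: in the $\delta_\rho\leq 10^{-6}$ case one simply post-composes with a color-permuting automorphism to make $0$ the rarest color; in the $\bar P_3=0$ case one uses the identity $\eval{1/3-K_3^E}{E}=4f+\tfrac73\bar P_3+2\eval{(e-2/3)^2}{1}\geq 4f\geq 0$, not Fisher/Nikiforov.)

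Second, your proposed SOS certificate for \ref{graph3:d} over the uncolored types $\sigma\in\{E,N\}$ cannot work: $\delta_0$ and $\kappa$ live in $\scr A^0[T_{\Graph}^\ast]$ and are invisible to $\eval{g^2}{\sigma}$ for uncolored $\sigma$. The actual certificate operates in $\scr A^0_4[T_{\Graph}^\ast]$ (all $357$ colored $4$-vertex models) over eight \emph{colored} types of size at most~$2$ (two global pieces plus the six types based on $\rho_0,\nu_{12},\rho_1,\nu_{01},\rho_2,\nu_{02}$), with $12\times 12$ PSD matrices obtained by rounding an SDP; and even then it also uses the optimality term above together with an auxiliary term $\tfrac{87}{40}\kappa'\delta_\rho$ that is discharged differently under the two hypotheses. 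Relatedly, your plan to derive \ref{graph3:c} from \ref{graph3:d} via $\kappa\leq 10^5\cdot\tfrac34\delta_0^3$ and $\delta_0^2\leq \delta_\rho+O(\kappa)$ only gives $\delta_0^2\bigl(1-O(\delta_0)\bigr)\leq\delta_\rho$, which is vacuous once $\delta_0$ is of constant size (as can happen in the $\bar P_3=0$ case); one needs a separate, smaller flag-algebra inequality for \ref{graph3:c}, again using the optimality term.
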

\begin{proof}
Our coloring will not depend on the orientation (that is, $\psi^\ast$ will be completely determined by $\psi\df \phi|_{\text{OE}}$). Intuitively, we pick an edge $e$ and ${\Bbb Z}_3$-color our graph as shown on Figure \ref{c_int}. We choose $e$ to minimize the density of bad edges $\kappa$. Then we permute colors if necessary (note that the element $\kappa$ is invariant under such permutation) to make sure that 0 is the least frequent color, ensuring property \ref{graph3:a}.

Let us now begin the formal proof. Consider the random homomorphism $\rn{\phi_A}$ rooted at $\phi$, and choose a particular $\phi^A\in S^A(\phi)$ that:
\begin{enumerate}
\item minimizes $\pi^{\text{OE},A}(K_3^E)$ if $\phi(\bar P_3)=0$;

\item minimizes $\pi^{\text{OE},A}\pi^{\C}(\kappa)$ (see the commutative diagram \refeq{commutative}) if $\phi(\delta_\rho)\leq 10^{-6}$.
\end{enumerate}
Let $\psi\df\phi|_{\text{OE}}$ and $\psi^E\df \phi^A|_{\text{OE}^A}\in \Hom^+(\scr A^E[T_{\Graph}], {\Bbb R})$.
Note that in the first case ($\phi(\bar P_3)=0$) the fact $\phi^A\in S^A(\phi)$ readily implies
\begin{equation} \label{missing_copies}
\psi^E(\bar P_3^E) =\psi^E|_i(\bar P_3^{1,b})=\psi^E|_i(\bar P_3^{1,c})=0\ (i=1,2),
\end{equation}
that is, copies of $\bar P_3$ are missing even if they contain one or two distinguished vertices.

In the first case we simply let
$$
\phi^\ast=\phi^A|_{\text{OC}}
$$
(see again \refeq{commutative}). In the second case we additionally compose $\phi^A|_{\text{OC}}$ with an appropriate color permuting automorphism of $\scr A^0[T_{\FDF}^\ast]$ to make sure that the resulting homomorphism satisfies property \ref{graph3:a} in Theorem \ref{graph3:thm}.

Let $\psi^\ast\df \phi^\ast|_{\text{OE}^\ast}\in \Hom^+(\scr A^0[T_{\Graph}^\ast], {\Bbb R})$. We check all four properties claimed in Theorem \ref{graph3:thm} one by one.

\medskip
\noindent
{\bf Property \ref{graph3:a}.} In the second case ($\phi(\delta_\rho)\leq 10^{-6}$) it follows from the construction.

If $\phi(\bar P_3)=0$ then we first note that (by \refeq{missing_copies}) $\psi^\ast(\delta_0)=\psi^E(1/3-K_3^E)$. Now, we have the following calculation in $T_{\Graph}$:
$$
\eval{1/3-K_3^E}{E} =4f+\frac 73\bar P_3+2\eval{(e-2/3)^2}{1}\geq 4f,
$$
which, along with $\phi(f)\geq 0$, implies $\phi(\eval{1/3-K_3^E}{E})\geq 0$ and thus there exists at least one extension $\psi^E\in S^E(\psi)$ such that
\begin{equation} \label{good_e}
\psi^E{K_3^E}\leq 1/3.
\end{equation}
 But by \cite[Theorem 4.1]{flag}, the distribution of $\rn{\psi^E}$ is a convex combination of distributions of $\rn{\phi^A}|_{\text{OE}^A}$ and $\rn{\phi^B}|_{\text{OE}^B}$. Since $K_3^E$ is symmetric w.r.t. permuting labels, all three real-valued random variables  $\rn{\psi^E}{K_3^E}, \rn{\phi^A}|_{\text{OE}^A}(K_3^E)$ and $\rn{\phi^B}|_{\text{OE}^B}(K_3^E)$ have identical distributions. And since our particular choice of $\phi^A$ minimizes $\phi^A|_{\text{OE}^A}(K_3^E)$, property \ref{graph3:a} follows from \refeq{good_e}.

 \medskip
\noindent
{\bf Property \ref{graph3:b}:} by inspecting Figure \ref{c}. $\phi(\bar P_3)=0$ and \refeq{missing_copies} readily imply that the only non-zero contribution to $\psi^\ast(\kappa)$ can be made by copies of $K_4^E$ and, therefore, $\psi^\ast(\kappa)=\psi^\ast(\rho_0)$.

\smallskip
Checking the critical properties \ref{graph3:c} and \ref{graph3:d} is a typical (and rather cumbersome in the second case) calculation in the flag algebras found with the help of a computer. Before we proceed, let us note that even if in the first case $\phi^A|_{\text{OE}^A}$ was chosen to minimize $K_3^E$, it nonetheless minimizes $\phi^A(\kappa)$ (which in this case is the same as $K_4^E$ by property \ref{graph3:b}) as well. Combinatorially this is obvious: $\psi$ corresponds to a complete $k$-partite graph (with $k$ possibly being countably infinite) and both $K_3^E$ and $K_4^E$ are minimized by the same edges connecting the two largest parts. We omit a straightforward formal proof.

\medskip
\noindent
{\bf Property \ref{graph3:c}.} Note that $\psi^\ast(\kappa)=\psi^E(\pi^{\C}(\kappa))$ (since $\kappa$ is invariant under color-permuting automorphisms). On the other hand,
$$
\expect{\rn{\psi^E}(\pi^{\C}(\kappa))} =\frac{\eval{\pi^\C(\kappa)}{E}}{\psi(\rho)}
$$
(\cite[Definition 10]{flag}). Since our chosen $\psi^E\in S^E(\psi)$ minimizes $\pi^{\C}(\kappa)$ (as before, due to the fact that $\pi^{\C}(\kappa)\in\scr F^E_3[T_{\Graph}]$ is symmetric under the permutation of labels), we conclude that
$$
\psi(\eval{\pi^{\C}(\kappa)}{E}) \geq \psi^\ast(\kappa)\psi(\rho).
$$
Lifting $\eval{\pi^{\C}(\kappa)}{E}$ and $\rho$ to $\scr A^0[T_{\Graph}^\ast]$ via the color-erasing interpretation, we can simply say that $\psi^\ast$ obeys the inequality
\begin{equation} \label{optimality}
\pi^{\CE}(\eval{\pi^{\C}(\kappa)}{E}) \geq \kappa\cdot \pi^{\CE}(\rho).
\end{equation}

Let
\begin{equation} \label{f1}
\longeq{f_1&\df& \frac 49(1-9(p_0p_1+p_0p_2+p_1p_2)^2)\\ &=& (\delta_0^2+\frac 13(p_1-p_2)^2)(1+3(p_0p_1+p_0p_2+p_1p_2))\geq \delta_0^2.}
\end{equation}
Let $f_2\in\scr A^0_4[T_{\Graph}]$ be defined as
\begin{equation} \label{f2}
f_2\df 4(\eval{e^2}{1}-\rho^2)+\eval{(P_3^N-2I_3^N)^2}{N}\geq 0.
\end{equation}
Then the inequality in the property \ref{graph3:c} follows from \refeq{f_assumption}, \refeq{optimality} \refeq{f1}, \refeq{f2} and the inequality
\begin{equation} \label{main_inequality}
f_1+2(\pi^{\CE}(\eval{\pi^{\C}(\kappa)}{E}) - \kappa\cdot \pi^{\CE}(\rho)) +20\pi^{\CE}(f)+2\pi^{\CE}(f_2)\leq 6\pi^{\CE}(\delta_\rho).
\end{equation}

\refeq{main_inequality} itself is checked by a direct calculation of coefficients in front of all models from $\scr A^0[T_{\Graph}^\ast]$ (there are 357 of them). It might be helpful to note that there is only one additive term in \refeq{main_inequality} that mixes up the two structures and re-write it as
\begin{equation}\label{inequality}
    \kappa\pi^{\CE}(\rho)\geq \frac 12f_1+\pi^{\CE}(\eval{\pi^{\C}(\kappa)}{E}+10f+f_2-3\delta_\rho);
\end{equation}
note also that both sides here are invariant under permuting colors.

Figure \ref{color} tabulates the values of $\frac 12f_1$ for models in $\scr A^0[T_{\Graph}^\ast]$ where $\chi$ colors vertices according to the scheme shown in this figure ($a,b,c\in {\Bbb Z}_3$ are pairwise different).
\begin{figure}[tb]
\begin{center}
% This file was made with: LaTeXPiX  (Build 3618)
% Coded by: N.J.H.M. van Beurden
% Email: beurden@email.com
% Webpage: http://www.beurden.cjb.net
\setlength{\unitlength}{0.254mm}
\begin{picture}(473,87)(20,-96)
        \special{color rgb 0 0 0}\allinethickness{0.254mm}\special{sh 0.99}\put(40,-15){\ellipse{4}{4}} % Shade Dot
        \special{color rgb 0 0 0}\allinethickness{0.254mm}\special{sh 0.99}\put(40,-65){\ellipse{4}{4}} % Shade Dot
        \special{color rgb 0 0 0}\allinethickness{0.254mm}\special{sh 0.99}\put(90,-15){\ellipse{4}{4}} % Shade Dot
        \special{color rgb 0 0 0}\allinethickness{0.254mm}\special{sh 0.99}\put(90,-65){\ellipse{4}{4}} % Shade Dot
        \special{color rgb 0 0 0}\put(20,-21){\shortstack{$a$}} % Plain Text
        \special{color rgb 0 0 0}\put(20,-71){\shortstack{$a$}} % Plain Text
        \special{color rgb 0 0 0}\put(100,-71){\shortstack{$a$}} % Plain Text
        \special{color rgb 0 0 0}\put(100,-21){\shortstack{$a$}} % Plain Text
        \special{color rgb 0 0 0}\put(55,-96){\shortstack{$2/9$}} % Plain Text
        \special{color rgb 0 0 0}\allinethickness{0.254mm}\special{sh 0.99}\put(165,-15){\ellipse{4}{4}} % Shade Dot
        \special{color rgb 0 0 0}\allinethickness{0.254mm}\special{sh 0.99}\put(165,-65){\ellipse{4}{4}} % Shade Dot
        \special{color rgb 0 0 0}\allinethickness{0.254mm}\special{sh 0.99}\put(215,-15){\ellipse{4}{4}} % Shade Dot
        \special{color rgb 0 0 0}\allinethickness{0.254mm}\special{sh 0.99}\put(215,-65){\ellipse{4}{4}} % Shade Dot
        \special{color rgb 0 0 0}\put(145,-21){\shortstack{$a$}} % Plain Text
        \special{color rgb 0 0 0}\put(145,-71){\shortstack{$a$}} % Plain Text
        \special{color rgb 0 0 0}\put(225,-71){\shortstack{$b$}} % Plain Text
        \special{color rgb 0 0 0}\put(225,-21){\shortstack{$a$}} % Plain Text
        \special{color rgb 0 0 0}\put(180,-96){\shortstack{$2/9$}} % Plain Text
        \special{color rgb 0 0 0}\allinethickness{0.254mm}\special{sh 0.99}\put(290,-15){\ellipse{4}{4}} % Shade Dot
        \special{color rgb 0 0 0}\allinethickness{0.254mm}\special{sh 0.99}\put(290,-65){\ellipse{4}{4}} % Shade Dot
        \special{color rgb 0 0 0}\allinethickness{0.254mm}\special{sh 0.99}\put(340,-15){\ellipse{4}{4}} % Shade Dot
        \special{color rgb 0 0 0}\allinethickness{0.254mm}\special{sh 0.99}\put(340,-65){\ellipse{4}{4}} % Shade Dot
        \special{color rgb 0 0 0}\put(270,-21){\shortstack{$a$}} % Plain Text
        \special{color rgb 0 0 0}\put(270,-71){\shortstack{$b$}} % Plain Text
        \special{color rgb 0 0 0}\put(350,-71){\shortstack{$b$}} % Plain Text
        \special{color rgb 0 0 0}\put(350,-21){\shortstack{$a$}} % Plain Text
        \special{color rgb 0 0 0}\put(295,-96){\shortstack{$-1/9$}} % Plain Text
        \special{color rgb 0 0 0}\allinethickness{0.254mm}\special{sh 0.99}\put(415,-15){\ellipse{4}{4}} % Shade Dot
        \special{color rgb 0 0 0}\allinethickness{0.254mm}\special{sh 0.99}\put(415,-65){\ellipse{4}{4}} % Shade Dot
        \special{color rgb 0 0 0}\allinethickness{0.254mm}\special{sh 0.99}\put(465,-15){\ellipse{4}{4}} % Shade Dot
        \special{color rgb 0 0 0}\allinethickness{0.254mm}\special{sh 0.99}\put(465,-65){\ellipse{4}{4}} % Shade Dot
        \special{color rgb 0 0 0}\put(395,-21){\shortstack{$a$}} % Plain Text
        \special{color rgb 0 0 0}\put(395,-71){\shortstack{$b$}} % Plain Text
        \special{color rgb 0 0 0}\put(475,-71){\shortstack{$c$}} % Plain Text
        \special{color rgb 0 0 0}\put(475,-21){\shortstack{$a$}} % Plain Text
        \special{color rgb 0 0 0}\put(420,-96){\shortstack{$-1/9$}} % Plain Text
        \special{color rgb 0 0 0} % Set color to black again (default font color)
\end{picture}
\caption{\label{color} Values of $\frac 12f_1$}
\end{center}
\end{figure}

Likewise, Figure \ref{element} represents the element $\eval{\pi^{\C}(\kappa)}{E}+10f+f_2-3\delta_\rho\in \scr A^0_4[T_\Graph]$.
\begin{figure}[tb]
\begin{center}
\input{element.eepic}
\caption{\label{element} $T_\Graph$-part}
\end{center}
\end{figure}
Then the inequality \refeq{inequality} basically says that if we arbitrarily align the first or the second picture on Figure \ref{color} with the third or fourth picture on Figure \ref{element} (all other cases result in a non-positive coefficient in the right-hand side of \refeq{inequality} and hence are trivial), then for at least two edges their complement will contribute to $\kappa$. This is clear.

\medskip
\noindent
{\bf Property \ref{graph3:d}.}
We form the element
\begin{equation}\label{f3_def}
    \longeq{f_3 &\df& \frac 34\delta_0^3-f+\frac{87}{40}\kappa'\delta_\rho -\frac 1{39240}{\kappa}\\ && \hspace{\parindent}-\frac{43}{160}(\pi^{\CE}(\eval{\pi^{\C}(\kappa)}{E}) -\kappa\cdot \pi^{\CE}(\rho))\\ &\leq& \frac 34\delta_0^3-f+\frac{87}{40}\kappa'\delta_\rho -2\cdot 10^{-5}\kappa.}
\end{equation}
We claim that $f_3\geq 0$; let us first check that is suffices for finishing the proof of property \ref{graph3:d}. We only have to take care of the term $\frac{87}{40}\kappa'\delta_\rho$ in \refeq{f3_def}.

Indeed, if $\phi(\bar P_3)=0$ then $\psi^\ast(\kappa')=0$ by already proven part \ref{graph3:b}, and we are done. On the other hand, if $0\leq \delta_\rho \leq 10^{-6}$ then $\frac{87}{40}\kappa'\delta_\rho\leq \frac{87}{40}\kappa\delta_\rho\leq 10^{-5}\kappa$, and $f_3\geq 0$ still implies property \ref{graph3:d}.

\medskip
Our proof of $f_3\geq 0$ is of distinct brute-force nature, and it is obtained by finding a sufficiently good rational approximation to the numerical outcome of the corresponding semi-definite program. Namely,
\begin{equation}\label{f3_main}
    f_3\geq \delta_0p_0\of{\frac 94(p_1-p_2)^2+\frac{491}{654}\kappa} +\frac 1{5232}\sum_{i=1}^8\eval{Q_i(\vec g_i)}{\sigma_i},
\end{equation}
where $\sigma_1,\ldots,\sigma_8$ are types, $\vec g_i=(g_{i1},\ldots,g_{id_i})$ tuples of elements from $\scr A^{\sigma_i}$, and $Q_i$ are positive semidefinite quadratic forms with integer coefficients given by $(d_i\times d_i)$ positive semidefinite matrices $M_i$.
More precisely, $\sigma_1=\sigma_2\df 0$. $d_1=4$, $\vec g_1\df (\nu_{12},v_{01}+\nu_{02},\rho_1+\rho_2,\rho_0)$, and
$$
M_1\df \left[ \begin {array}{cccc} 6549&5619&-598&-1468\\\noalign{\medskip}
5619&7868&-832&-2094\\\noalign{\medskip}-598&-832&99&150
\\\noalign{\medskip}-1468&-2094&150&1406\end {array} \right].
$$
$d_2=3$, $\vec g_2=(\nu_2-\nu_1-\rho_{02}+\rho_{01}, \nu_{02}-\nu_{01},\rho_2-\rho_{1})$, and
$$
M_2\df  \left[ \begin {array}{ccc} 1308&-598&-209\\\noalign{\medskip}-598&279
&95\\\noalign{\medskip}-209&95&39\end {array} \right].
$$

The remaining types $\sigma_3,\ldots,\sigma_8$ have size 2 and are based on the models $\rho_0,\nu_{12},\rho_1,\nu_{01},\rho_2,\nu_{02}$, respectively. We assume that in the non-symmetric types $\sigma_4,\sigma_6,\sigma_8$ the label 1 is received by the vertex that has smaller ($o<1<2$) color. $d_i=12\ (3\leq i\leq 8)$, and $\vec g_i$ simply enumerates all flags in $\scr F^{\sigma_i}_3$ in a certain order. Let us specify this order, uniformly for all $3\leq i\leq 8$.

For any type $\sigma$ of size 2, every flag $F=(M,\theta)\in \scr F^\sigma_3[T_\Graph^\ast]$ is uniquely determined by a triple $(a,\epsilon_1,\epsilon_2)\ (a\in {\Bbb Z}_3,\epsilon_i\in \{0,1\})$, where $a$ is the color of the unique free vertex $v\in V(M)$, and $\epsilon_i=1$ if and only if $(\theta(i),v)\in E(M)$. We enumerate $\scr F^\sigma_3$ by assigning the number $j=1+4a+2\epsilon_1+\epsilon_2$ to this flag, and, for $\sigma=\sigma_i$, it is the $j$th element in $\vec g_i$.

The matrices $M_i$ are given by
$$
M_3\df \tiny \left[ \begin {array}{cccccccccccc} 837&0&0&999&470&-222&-222&-2&470&
-222&-222&-2\\\noalign{\medskip}0&0&0&0&0&0&0&0&0&0&0&0
\\\noalign{\medskip}0&0&0&0&0&0&0&0&0&0&0&0\\\noalign{\medskip}999&0&0
&1209&565&-267&-267&-2&565&-267&-267&-2\\\noalign{\medskip}470&0&0&565
&1235&1487&1487&1317&-697&-1738&-1738&-1320\\\noalign{\medskip}-222&0&0
&-267&1487&6186&-661&2209&-1738&778&-6063&-2209\\\noalign{\medskip}-
222&0&0&-267&1487&-661&6186&2209&-1738&-6063&778&-2209
\\\noalign{\medskip}-2&0&0&-2&1317&2209&2209&1812&-1320&-2209&-2209&-
1807\\\noalign{\medskip}470&0&0&565&-697&-1738&-1738&-1320&1235&1487&
1487&1317\\\noalign{\medskip}-222&0&0&-267&-1738&778&-6063&-2209&1487&
6186&-661&2209\\\noalign{\medskip}-222&0&0&-267&-1738&-6063&778&-2209&
1487&-661&6186&2209\\\noalign{\medskip}-2&0&0&-2&-1320&-2209&-2209&-
1807&1317&2209&2209&1812\end {array} \right]
$$
$$
M_4\df \tiny \left[ \begin {array}{cccccccccccc} 2916&3470&3470&-4218&5950&6718&
4584&-2394&5950&4584&6718&-2394\\\noalign{\medskip}3470&4660&4128&-
4644&6626&7794&5940&-3322&6464&6048&8854&-2436\\\noalign{\medskip}3470
&4128&4660&-4644&6464&8854&6048&-2436&6626&5940&7794&-3322
\\\noalign{\medskip}-4218&-4644&-4644&9928&-8710&-12116&-6680&3014&-
8710&-6680&-12116&3014\\\noalign{\medskip}5950&6626&6464&-8710&20386&
19018&8784&-3532&7288&7046&5488&-6446\\\noalign{\medskip}6718&7794&
8854&-12116&19018&26982&13098&-2558&5488&10994&10042&-7774
\\\noalign{\medskip}4584&5940&6048&-6680&8784&13098&8742&-3386&7046&
8466&10994&-4004\\\noalign{\medskip}-2394&-3322&-2436&3014&-3532&-2558
&-3386&3196&-6446&-4004&-7774&880\\\noalign{\medskip}5950&6464&6626&-
8710&7288&5488&7046&-6446&20386&8784&19018&-3532\\\noalign{\medskip}
4584&6048&5940&-6680&7046&10994&8466&-4004&8784&8742&13098&-3386
\\\noalign{\medskip}6718&8854&7794&-12116&5488&10042&10994&-7774&19018
&13098&26982&-2558\\\noalign{\medskip}-2394&-2436&-3322&3014&-6446&-
7774&-4004&880&-3532&-3386&-2558&3196\end {array} \right]
$$
$$
M_5\df \tiny \left[ \begin {array}{cccccccccccc} 16258&8232&8232&6430&3311&-1191&-
1191&-3036&5681&-128&-128&-3056\\\noalign{\medskip}8232&10089&7010&
6750&-3325&-651&-1830&-3146&1142&-1394&-2761&-2975\\\noalign{\medskip}
8232&7010&10089&6750&-3325&-1830&-651&-3146&1142&-2761&-1394&-2975
\\\noalign{\medskip}6430&6750&6750&5338&-2682&-980&-980&-2485&862&-
1657&-1657&-2348\\\noalign{\medskip}3311&-3325&-3325&-2682&6397&485&
485&1219&3141&2273&2273&1007\\\noalign{\medskip}-1191&-651&-1830&-980&
485&411&-47&456&-164&564&39&431\\\noalign{\medskip}-1191&-1830&-651&-
980&485&-47&411&456&-164&39&564&431\\\noalign{\medskip}-3036&-3146&-
3146&-2485&1219&456&456&1163&-424&763&763&1094\\\noalign{\medskip}5681
&1142&1142&862&3141&-164&-164&-424&2681&753&753&-503
\\\noalign{\medskip}-128&-1394&-2761&-1657&2273&564&39&763&753&1237&
620&679\\\noalign{\medskip}-128&-2761&-1394&-1657&2273&39&564&763&753&
620&1237&679\\\noalign{\medskip}-3056&-2975&-2975&-2348&1007&431&431&
1094&-503&679&679&1045\end {array} \right]
$$
$$
M_6\df \tiny \left[ \begin {array}{cccccccccccc} 25636&28299&16944&7514&4589&1193&
7024&-8103&3037&450&3967&-11616\\\noalign{\medskip}28299&35709&23685&
8218&7024&3660&14870&-11277&3761&1519&5820&-14960\\\noalign{\medskip}
16944&23685&16839&5342&5547&3036&13047&-8122&2485&945&4210&-10189
\\\noalign{\medskip}7514&8218&5342&16801&-6821&-8516&-3230&-1058&-7132
&-11326&-3312&-1718\\\noalign{\medskip}4589&7024&5547&-6821&17546&4336
&19158&-7091&10937&4364&6238&-7398\\\noalign{\medskip}1193&3660&3036&-
8516&4336&6926&5276&-1747&4273&7886&3242&-2104\\\noalign{\medskip}7024
&14870&13047&-3230&19158&5276&27620&-10496&10353&3134&7497&-10810
\\\noalign{\medskip}-8103&-11277&-8122&-1058&-7091&-1747&-10496&5243&-
3855&-802&-3230&6125\\\noalign{\medskip}3037&3761&2485&-7132&10937&
4273&10353&-3855&7738&5140&4274&-4235\\\noalign{\medskip}450&1519&945&
-11326&4364&7886&3134&-802&5140&9786&3345&-1212\\\noalign{\medskip}
3967&5820&4210&-3312&6238&3242&7497&-3230&4274&3345&2958&-3730
\\\noalign{\medskip}-11616&-14960&-10189&-1718&-7398&-2104&-10810&6125
&-4235&-1212&-3730&7536\end {array} \right]
$$
$$
M_7\df \tiny \left[ \begin {array}{cccccccccccc} 16258&8232&8232&6430&5681&-128&-
128&-3056&3311&-1191&-1191&-3036\\\noalign{\medskip}8232&10089&7010&
6750&1142&-1394&-2761&-2975&-3325&-651&-1830&-3146\\\noalign{\medskip}
8232&7010&10089&6750&1142&-2761&-1394&-2975&-3325&-1830&-651&-3146
\\\noalign{\medskip}6430&6750&6750&5338&862&-1657&-1657&-2348&-2682&-
980&-980&-2485\\\noalign{\medskip}5681&1142&1142&862&2681&753&753&-503
&3141&-164&-164&-424\\\noalign{\medskip}-128&-1394&-2761&-1657&753&
1237&620&679&2273&564&39&763\\\noalign{\medskip}-128&-2761&-1394&-1657
&753&620&1237&679&2273&39&564&763\\\noalign{\medskip}-3056&-2975&-2975
&-2348&-503&679&679&1045&1007&431&431&1094\\\noalign{\medskip}3311&-
3325&-3325&-2682&3141&2273&2273&1007&6397&485&485&1219
\\\noalign{\medskip}-1191&-651&-1830&-980&-164&564&39&431&485&411&-47&
456\\\noalign{\medskip}-1191&-1830&-651&-980&-164&39&564&431&485&-47&
411&456\\\noalign{\medskip}-3036&-3146&-3146&-2485&-424&763&763&1094&
1219&456&456&1163\end {array} \right]
$$
$$
M_8\df \tiny \left[ \begin {array}{cccccccccccc} 25636&28299&16944&7514&3037&450&
3967&-11616&4589&1193&7024&-8103\\\noalign{\medskip}28299&35709&23685&
8218&3761&1519&5820&-14960&7024&3660&14870&-11277\\\noalign{\medskip}
16944&23685&16839&5342&2485&945&4210&-10189&5547&3036&13047&-8122
\\\noalign{\medskip}7514&8218&5342&16801&-7132&-11326&-3312&-1718&-
6821&-8516&-3230&-1058\\\noalign{\medskip}3037&3761&2485&-7132&7738&
5140&4274&-4235&10937&4273&10353&-3855\\\noalign{\medskip}450&1519&945
&-11326&5140&9786&3345&-1212&4364&7886&3134&-802\\\noalign{\medskip}
3967&5820&4210&-3312&4274&3345&2958&-3730&6238&3242&7497&-3230
\\\noalign{\medskip}-11616&-14960&-10189&-1718&-4235&-1212&-3730&7536&
-7398&-2104&-10810&6125\\\noalign{\medskip}4589&7024&5547&-6821&10937&
4364&6238&-7398&17546&4336&19158&-7091\\\noalign{\medskip}1193&3660&
3036&-8516&4273&7886&3242&-2104&4336&6926&5276&-1747
\\\noalign{\medskip}7024&14870&13047&-3230&10353&3134&7497&-10810&
19158&5276&27620&-10496\\\noalign{\medskip}-8103&-11277&-8122&-1058&-
3855&-802&-3230&6125&-7091&-1747&-10496&5243\end {array} \right].
$$
We have little to add here but what was already said before: this is a sufficiently close rational approximation to a numerical solution of the corresponding SDP. \refeq{f3_main} is checked by comparing coefficients in front of all 357 models from $\scr A^0_4[T_\Graph^\ast]$. The only noticeable piece of structure here is the observation that the inequality \refeq{f3_main} is invariant under the automorphism permuting colors 1 and 2. Accordingly, $Q_1$ and $Q_2$ represent positive and negative parts of the corresponding quadratic form (cf. \cite[Section 4]{turan}), and in the pairs $(M_5,M_7)$ and $(M_6,M_8)$ one matrix is obtained from the other by permuting rows and columns according to this automorphism.

Theorem \ref{graph3:thm} is proved.
\end{proof}

\subsection{Finding vertices of low out-degree} \label{low_out_degree}

Now we are ready to finish the proof of Theorem \ref{main}. Fix $\phi\in\Hom^+(\scr A^0[T_\FDF],{\Bbb R})$ such that either $\phi(\bar P_3)=0$ or $\phi(\delta_\rho)\leq 10^{-6}$. We have to show that $\phi(\delta)\leq 0$, where $\delta$ is given by \refeq{main_expression}.

If $\phi(f)\leq 0$, we are done. Otherwise we apply Theorem \ref{graph3:thm} and find an extension $\phi^\ast\in\Hom^+(\scr A^0[T_\FDF^\ast], {\Bbb R})$ such that $\psi^\ast\df \phi^\ast|_{\text{OE}^\ast}$ has all the properties required in its statement.

\smallskip
Before starting the formal argument, let us briefly outline its combinatorial essence. What we have so far, is a $\Bbb Z_3$-coloring of our orgraph such that the color 0 is underrepresented (part \ref{graph3:a} of Theorem \ref{graph3:thm}) by an amount $\delta_0$ that is relatively large with respect to $f$ (part \ref{graph3:d}) and such that the tripartite (unordered) graph defined by this coloring is very close to our original graph after disregarding its orientation.

Our argument focusses on the subgraph induced by colors 1 and 2. Let us imagine for a moment that $\widetilde\kappa=0$ (which is true anyway in the simple case $\phi(\bar P_3)=0$ by part \ref{graph3:b} of Theorem \ref{graph3:thm}). Then this subgraph is an orientation of a {\em complete bipartite} graph that does not contain $\vec C_4$ and hence is acyclic. Therefore, there exists a vertex of out-degree 0 and in the original graph it has (relative) out-degree $\leq p_0=\frac 13-\delta_0$. Proceeding by an obvious induction, we conclude that for any $x$, the fraction of vertices of relative out-degree $\leq \frac 13-\delta_0+x$ is at least $x$, that gives us $\eval{\delta_\alpha^2}{1}\geq \int_0^{\delta_0}(\delta_0-x)^2dx= \frac{\delta_0^3}{3}$. By part \ref{graph3:d} of Theorem \ref{graph3:thm}, this suffices for our purposes.

The main complication is that in general the restriction of our original graph to colors 1 and 2 is not complete bipartite. But, fortunately, by part \ref{graph3:d} of Theorem \ref{graph3:thm}, the difference between them is very small, namely, of order $\delta_0^3$, and we are going to exploit this fact. By Markov's inequality, there are at most $O(\delta_0^{3/2})$ ``bad'' vertices of relative degree $\Omega(\delta_0^{3/2})$ in the difference graph, and we ignore them. This leaves us with the situation when all vertices have a low ($\ll \delta_0$) degree in the difference graph, and the only non-trivial thing we have to show is that our previous argument (when the difference graph is empty) is sufficiently stable to tolerate this imperfection.

\bigskip
Now we begin the formal proof. $\phi^\ast\in \Hom^+(\scr A^0[T_\FDF^\ast],{\Bbb R})$ is fixed throughout the rest of the argument, so we will abbreviate $\phi^\ast(f)\ (f\in A^0[T_\FDF^\ast])$ simply to $f$.

Let
$$
\xi\df \sqrt{\widetilde\kappa},
$$
and assume that $x$ is a real parameter satisfying
\begin{equation} \label{bound_on_x}
7\xi< x\leq \min(p_1,p_2).
\end{equation}
For $a\in \{1,2\}$ we let
$$
Bad_a\df\set{\phi\in\Hom^+(\scr A^{(a)}[T_\FDF^\ast], {\Bbb R})}{\phi(\mu_2^{(a)}(\widetilde\kappa))>\xi}
$$
(recall from \cite[Section 4.3]{flag} that $\mu_2^{(a)}(\widetilde\kappa)$ is the sum of all $\widetilde\kappa$-based flags $F\in \scr F^{(a)}_2[T_\FDF^\ast]$). Let also
$$
Bad_a(x)\df Bad_a \cup \set{\phi\in\Hom^+(\scr A^{(a)}[T_\FDF^\ast],{\Bbb R})}{\phi(\alpha_{a,3-a})<x}.
$$

$p_1,p_2>0$ by \refeq{bound_on_x}, thus we may consider random homomorphisms $\rn{(\phi^\ast)^{(a)}}$ rooted at $\phi^\ast$ that we will abbreviate to $\rn{\phi^a}$. Let
\begin{eqnarray*}
b_a &\df& p_a\cdot\prob{\rn{\phi^a}\in Bad_a};\\
b_a(x) &\df& p_a\cdot\prob{\rn{\phi^a}\in Bad_a(x)}.
\end{eqnarray*}

Note that
$$
\widetilde\kappa =\sum_{a=1}^2 \eval{\mu_2^{(a)}(\widetilde\kappa)}{(a)}
$$
which (by \cite[Definition 10]{flag}) translates to
$$
\widetilde\kappa = \sum_{a=1}^2 p_a\cdot\expect{\rn{\phi^a}(\mu_2^{(a)}(\widetilde\kappa))}.
$$
From this we conclude
$$
\widetilde\kappa \geq \sum_{a=1}^2 p_a\xi\cdot \prob{\rn{\phi^a}(\mu_2^{(a)}(\widetilde\kappa))>\xi} = \xi(b_1+b_2).
$$
Thus,
\begin{equation} \label{upper_on_b}
b_1+b_2\leq\frac{\widetilde\kappa}{\xi}=\xi.
\end{equation}

Our goal is to prove the lower bound
\begin{equation} \label{desired_bound}
b_1(x)+b_2(x)\geq x-6\xi,
\end{equation}
from which the main result will follow relatively easy by an integration on $x$, as explained in our informal description above.

If $b_1(x)=p_1$ or $b_2(x)=p_2$ then the bound trivially follows from the constraint \refeq{bound_on_x}.

Otherwise, closed subsets $S^{(a)}(\phi^\ast)\setminus Bad_a(x)\subseteq \Hom^+(\scr A^{(a)}, {\Bbb R})$ are non-empty, and we fix $\phi^a\in S^{(a)}(\phi^\ast)\setminus Bad_a(x)$ that minimizes $\alpha_{a,3-a}$. Note that by the definition of $Bad_a(x)$, $\phi^a(\alpha_{a,3-a})\geq x$.

Next, we apply \cite[Theorem 4.1]{flag} to the label-removing interpretation $T_\FDF^\ast\leadsto (T_\FDF^\ast)^{(1)}$ described in \cite[Section 2.3.1]{flag} and the homomorphism $\phi^1\in \Hom^+(\scr A^{(1)}[T_\FDF^\ast], {\Bbb R})$. We conclude that the random homomorphism $\rn{\phi^2}$ can be generated by first choosing (under a specific distribution irrelevant to our purposes) a type $\sigma$ of size 2 such that $\sigma|_a=(a)\ (a=1,2)$ and then computing $\rn{(\phi^1)^{\sigma,1}}|_2$. Since $\phi^2\in S^{(2)}(\phi^\sigma)$, and $\Hom^+(\scr A^\sigma, {\Bbb R})$ are compact spaces, this implies the existence of a type $\sigma$ with $\sigma|_a=(a)\ (a=1..2)$ and $\phi^\sigma\in S^{\sigma,1}(\phi^1)$ such that $\phi^\sigma|_2=\phi^2$.

$\phi^\sigma$ and $\phi^a=\phi^\sigma|_a$ will be fixed for the rest of the argument, and we will also drop them from all our equations and inequalities.

Let $F_a=(\Gamma,\theta)\in\scr F^\sigma_3$ be the flag in which the only free vertex $v\in V(\Gamma)$ receives color $a$, and the only new edge is $\langle \theta(3-a),v\rangle$. Then
\begin{equation} \label{bound_on_fa}
F_a\geq \pi^{\sigma,2}(\alpha_{3-a,a})-\pi^{\sigma,1}(\mu_2^a(\widetilde\kappa))\geq x-\xi>0,
\end{equation}
where we used the fact $\phi^a\not\in Bad_a(x)\ (a=1,2)$ and the restriction \refeq{bound_on_x}.

Next, $F_1F_2=\frac 12(G_0+G_1+G_2)$, where the flags $G_a=(\Gamma_a,\theta_a)\in \scr F^\sigma_4$ are defined as follows. $V(\Gamma_a)= \{\theta_a(1),\theta_a(2),v_1,v_2\}$, $G_a|_{\theta_a(1),\theta_a(2),v_b}=F_b\ (b=1..2)$ and the vertices $v_1,v_2$ are independent in $G_0$ and span the edge $\langle v_{3-a},v_a\rangle$ in $G_a$. Note that $G_0\leq \pi^\sigma(\widetilde\kappa)$, thus we have
\begin{equation} \label{bound_on_f1f2}
F_1F_2\leq \frac 12(G_1+G_2+\widetilde\kappa).
\end{equation}
Let $\sigma_a$ be the type of size 3 based on $F_a$ ($\sigma_a|_{[1,2]}=\sigma$, and the free vertex receives label 3). Given that $\phi^\sigma(F_a)>0$ by \refeq{bound_on_fa}, we can form random extensions $\rn{(\phi^\sigma)^{\sigma_a, [1,2]}}$ of $\phi^\sigma$ that we will denote simply by $\rn{\phi^{\sigma_a}}$. Let also $G_a^+\in\scr F^{\sigma_a}_4$ be obtained from $G_a\in\scr F^\sigma_3$ by additionally labeling the vertex $v_a$ with label 3 ($v_{3-a}$ remains unlabeled). Then
$$
\expect{\rn{\phi^{\sigma_a}}(G_a^+)} = \frac 12\frac{G_a}{F_a}.
$$
Substituting this into \refeq{bound_on_f1f2}, we get
$$
F_1F_2\leq \frac 12\widetilde\kappa +\sum_a F_a\expect{\rn{\phi^{\sigma_a}}(G_a^+)}.
$$

Next, we condition according to the event $\rn{\phi^{\sigma_a}}|_3\in Bad_a(x)$ and get the estimate
$$
\expect{\rn{\phi^{\sigma_a}}(G_a^+)}\leq F_{3-a}\cdot\prob{\rn{\phi^{\sigma_a}}|_3\in Bad_a(x)} + \condexpect{\rn{\phi^{\sigma_a}}(G_a^+)}{\rn{\phi^{\sigma_a}}|_3\not\in Bad_a(x)}
$$
(note that since $G_a^+\leq \pi^{\sigma_a,[1,2]}(F_{3-a})$, $\rn{\phi^{\sigma_a}}(G_a^+)\leq F_{3-a}$ with probability 1).
This leads us to
\begin{equation} \label{next_step}
\longeq{F_1F_2 &\leq& \frac 12\widetilde\kappa +\sum_a F_1F_2\prob{\rn{\phi^{\sigma_a}}|_3\in Bad_a(x)}\\&&\hspace{\parindent}+\sum_aF_a\condexpect{\rn{\phi^{\sigma_a}}(G_a^+)}{\rn{\phi^{\sigma_a}}|_3\not\in Bad_a(x)}.}
\end{equation}

Applying once more \cite[Theorem 4.1]{flag} to the label-erasing interpretation $T_\FDF^\ast\leadsto (T_\FDF^\ast)^\sigma$, we can generate $\rn{\phi^a}$ as a convex combination of distributions $\rn{\phi^{\sigma_a'}}|_3$ taken over those $\sigma_a'\in Ext(\sigma,\eta)\ (\eta\function{[2]}{[3]}, \eta(i)=i)$ in which $\chi(3)=a$. In particular, looking at $\sigma_a'=\sigma_a$, we conclude that
$$
F_a \prob{\rn{\phi^{\sigma_a}}|_3\in Bad_a(x)} \leq p_a\prob{\rn{\phi^a}\in Bad_a(x)} = b_a(x).
$$
This takes care of the second term in \refeq{next_step}:
\begin{equation} \label{onemorestep}
F_1F_2\leq \frac 12\widetilde\kappa +\sum_a F_{3-a}b_a(x)+\sum_aF_a\condexpect{\rn{\phi^{\sigma_a}}(G_a^+)}{\rn{\phi^{\sigma_a}}|_3\not\in Bad_a(x)}.
\end{equation}

In order to bound the remaining term $\condexpect{\rn{\phi^{\sigma_a}}(G_a^+)}{\rn{\phi^{\sigma_a}}|_3\not\in Bad_a(x)}$, we fix any particular $\phi^{\sigma_a}\in S^{\sigma_a,[1,2]}(\phi^\sigma)$ with $\phi^{\sigma_a}|_3\not\in Bad_a(x)$. The already made observation based on \cite[Theorem 4.1]{flag} readily implies that $\phi^{\sigma_a}|_3\in S^{(a)}(\phi^\ast)$. Given the way $\phi^a (=\phi^{\sigma_a}|_a)$ was defined, we conclude that $\phi^{\sigma_a}|_3(\alpha_{a,3-a})\geq \phi^{\sigma_a}|_a(\alpha_{a,3-a})$, or, dropping as always the fixed homomorphism $\phi^{\sigma_a}$ from notation,
\begin{equation} \label{two_alpha}
\pi^{\sigma_a,3}(\alpha_{a,3-a}) \geq \pi^{\sigma_a,a}(\alpha_{a,3-a}).
\end{equation}

Next,
\begin{equation} \label{bound_on_graphs}
G_a^+\leq \pi^{\sigma_a,[a,3]}(\vec P_3^{N_a}).
\end{equation}

We now use the fact that our orgraph is $\vec C_4$-free which implies
\begin{equation} \label{p3cop3}
\vec P_3^{N_a}\vec P_3^{N_a,\ast} \leq \frac 12 \pi^{N_a}(\widetilde\kappa).
\end{equation}
This is simply because in every $F=(\Gamma,\theta)\in \scr F^{N_a}_4,\ V(\Gamma)=\{\theta(1),\theta(2),v,w\}$ with $F|_{\{\theta(1),\theta(2),v\}}=\vec P_3^{N_a}$ and $F|_{\{\theta(1),\theta(2),w\}}=\vec P_3^{N_a,\ast}$, $v$ and $w$ can not be independent (otherwise, we would have obtained a copy of $\vec C_4$) and thus the pair $(v,w)$ contributes to $\widetilde\kappa$.

Now, $\vec P_3^{N_a}+\vec K_{2,1}^{N_a}\leq \pi^{N_a,1}(\alpha_{a,3-a})$. Similarly, $\vec P_3^{N_a,\ast}+\vec K_{2,1}^{N_a}\geq \pi^{N_a,2}(\alpha_{a,3-a})-\pi^{N_a,1}(\mu_2^{(a)}(\widetilde\kappa))$. Substracting,
$$
\vec P_3^{N_a,\ast}-\vec P_3^{N_a}\geq \pi^{N_a,2}(\alpha_{a,3-a})-\pi^{N_a,1}(\alpha_{a,3-a})-\pi^{N_a,1}(\mu_2^{(a)}(\widetilde\kappa)).
$$
We lift this inequality to the algebra $\scr A^{\sigma_a}$ via $\pi^{\sigma_a,[a,3]}$. Using also \refeq{two_alpha} and the fact $\phi^a\not\in Bad_a$, we see that
$$
\pi^{\sigma_a,[a,3]}(\vec P_3^{N_a,\ast}) \geq \pi^{\sigma_a,[a,3]}(\vec P_3^{N_a})-\xi.
$$
Lifting to this algebra also the inequality \refeq{p3cop3}, we conclude that $\pi^{\sigma_a,[a,3]}(\vec P_3^{N_a})\cdot (\pi^{\sigma_a,[a,3]}(\vec P_3^{N_a})-\xi)\leq\frac 12\widetilde\kappa$, from which we find $\pi^{\sigma_a,[a,3]}(\vec P_3^{N_a})\leq \xi+\sqrt{\widetilde\kappa/2}\leq 2\xi$. Comparing this inequality with \refeq{bound_on_graphs}, we finally find that for any $\phi^{\sigma_a}\in S^{\sigma_a,[1,2]}(\phi^\sigma)$ with $\phi^{\sigma_a}|_3\not\in Bad_a(x)$, we have $\phi^{\sigma_a}(G_a^+)\leq 2\xi$.

Thus, the bound \refeq{onemorestep} implies
$$
F_1F_2\leq \frac 12\widetilde\kappa +\sum_a F_{3-a}b_a(x) +2\xi\sum_a F_a=\frac 12\widetilde\kappa+\sum_a F_{3-a}(b_a(x)+2\xi).
$$
We can assume w.l.o.g. that $F_1\geq F_2$. Then we further have $F_1F_2\leq \frac 12\widetilde\kappa+F_1(b_1(x)+b_2(x)+4\xi)$. Dividing by $F_1$ and taking into account \refeq{bound_on_fa}, we complete the proof of \refeq{desired_bound} in the non-trivial case $b_1(x)<p_1,\ b_2(x)<p_2$.

\smallskip
Comparing \refeq{desired_bound} with \refeq{upper_on_b}, and consulting the definitions of $Bad_a$ and $Bad_a(x)$, we see that
\begin{equation} \label{preprob}
\sum_a p_a\prob{\rn{\phi^a}(\alpha_{a,3-a})<x \land \rn{\phi^a}(\mu_2^{(a)}(\widetilde\kappa))\leq\xi}\geq x-7\xi.
\end{equation}
On the other hand, applying once more \cite[Theorem 4.1]{flag} to the color-erasing interpretation, we observe that the distribution of $\rn{\phi^1}(\alpha)$ is the convex combination of distributions $\rn{\phi^a}\pi^{\CE,(a)}(\alpha)\ (a\in {\Bbb Z}_3)$ with weights $p_a$. Also,
$$
\pi^{\CE,(a)}(\alpha) \leq \alpha_{a,3-a} + \mu_2^{(a)}(\widetilde\kappa) +p_0= \frac 13 - (\delta_0 -\alpha_{a,3-a}-\mu^{(a)}_2(\widetilde\kappa)).
$$
Thus, \refeq{preprob} implies that for every $x\in (7\xi,\min(p_1,p_2)]$ we have the estimate
$$
\prob{1/3-\rn{\phi^1}(\alpha)>\delta_0-x-\xi} \geq x-7\xi.
$$
Integrating this inequality from $7\xi$ to $\min(p_1,p_2,\delta_0-\xi)$,
\begin{equation} \label{integral}
\longeq{\eval{\delta_\alpha^2}{1} &=& \expect{(1/3-\rn{\phi^1(\alpha)})^2} \geq \int_{7\xi}^{\min(p_1,p_2,\delta_0-\xi)} (\delta_0-x-\xi)^2dx\\ &=& \frac 13((\delta_0-8\xi)^3-\max(0,\delta_0-\xi-p_1,\delta_0-\xi-p_2)^3)\\ &\geq& \frac 13((\delta_0-8\xi)^3-\max(0,p_2-2/3,p_1-2/3)^3).}
\end{equation}

Now we are only left to take care of the case when either $p_1$ or $p_2$ are abnormally large. This requires one more simple calculation in $\scr A^0_3[T_\Graph^\ast]$:
$$
(p_a-2/3)\of{\frac 13+\frac 34\rho_0+\frac 32p_0p_{3-a}} +\pi^{\CE}(f)+(p_a-2/3)^2\leq \frac 12(\kappa-\rho_0).
$$
Since $\psi(f)\geq 0$ by our assumption, we get from here $(p_a-2/3)\leq\frac 32(\kappa-\rho_0)$, and, substituting this into \refeq{integral}, we finally conclude that
\begin{equation} \label{final_inequality}
\eval{\delta_\alpha^2}{1} \geq \frac 13\of{(\delta_0-8\xi)^3-\frac{27}{4}(\kappa-\rho_0)^3}.
\end{equation}

In the simple case $\psi(\bar P_3)=0$, part \ref{graph3:b} of Theorem \ref{graph3:thm} implies $\xi=\widetilde\kappa=\kappa-\rho_0=0$ and hence $\eval{\delta_\alpha^2}{1}\geq \frac 13\delta_0^3$. Now $\delta\leq 0$ follows from the bound $f\leq \frac 34\delta_0^3$ provided by part \ref{graph3:d} of Theorem \ref{graph3:thm}.

On the other hand, when $\delta_\rho$ is arbitrarily small, part \ref{graph3:c} of Theorem \ref{graph3:thm} implies that $\delta_0$ is also arbitrarily small, and then part \ref{graph3:d} implies $\widetilde\kappa\leq\kappa\leq O(\delta_0^3)$. Thus, \refeq{final_inequality} gives us $\eval{\delta_\alpha^2}{1}\geq \delta_0^3\of{\frac 13-o(1)}$, and, again, $\delta\leq 0$ follows (for sufficiently small $\delta_\rho$) from $f\leq \frac 34\delta_0^3$.

Theorem \ref{main} is proved.

\section{Conclusion} \label{concl}

We have proved Tur\'an's conjecture for a natural class of 3-graphs that contain all Tur\'an-Brown-Kostochka examples. This opens up a principal (but, admittedly, somewhat distant at the moment) possibility to attack the general case by trying to construct an inverse interpretation {\em of} the Fon-der-Flaass theory {\em in} the theory of Tur\'an 3-graphs, possibly in some loose sense. It is still too early to tell, however, how promising is this approach.

A more accessible goal might be to remove extra assumptions from Theorem \ref{main}. Most likely, that should entail a significant simplification of our proof that, in our opinion, would be interesting in its own right.

\bibliographystyle{alpha}
\bibliography{razb}
\end{document}